\newtheoremstyle{mystyle}{}{}{\rmfamily}%
{}{\normalfont\bfseries}{ }{ }{}
\theoremstyle{mystyle}
\newtheorem{theorem}{Theorem}[section]
\newtheorem{lemma}[theorem]{Lemma}
\newtheorem{definition}[theorem]{Definition}
\newtheorem{corollary}[theorem]{Corollary}
\newtheorem{proposition}[theorem]{Proposition}
\newtheorem{remark}[theorem]{Remark}
\newtheorem{example}[theorem]{\bf Example}
\newcommand{\keywords}[1]{\par\textbf{Keywords:} #1\par}
\newcommand{\dedicatory}[1]{%
	\thispagestyle{empty}
	\begin{center}
		\textit{#1} 
	\end{center}
	\setcounter{page}{1} 
}
\title{
Super-Shadowing and Supercyclicity
}
\author[1]{Eric Cabezas}
\author[2]{Manuel Saavedra}
\affil[1,2]{Instituto de Matemática, Universidade Federal do Rio de Janeiro, RJ, Brazil.}
\affil[1]{\texttt{eric@im.ufrj.br}}
\affil[2]{\texttt{saavmath@im.ufrj.br}}
\date{}
\begin{document}
	\maketitle
	
	\setlength{\headheight}{14.49998pt}
	
\dedicatory{This paper is dedicated to Professor Alexander Arbieto, in celebration of the 20th anniversary of the Seminário de Sistemas Dinâmicos}
	
\begin{abstract}
We introduce the super-shadowing property in linear dynamics, where pseudotrajectories are approximated by sequences of the form $(\lambda_n T^n x)$, with $(\lambda_n)_n$ being complex scalars. For compact operators on Banach spaces, we characterize the operators that possess the positive super-shadowing property and the positive limit super-shadowing property. Additionally, we demonstrate that no surjective isometric operator on a separable Banach space \(X\) with \(\text{dim}(X)>1\) can exhibit the positive super-shadowing property. Finally,  we provide some results on upper frequently supercyclic and reiteratively supercyclic operators.
\end{abstract}
	
\keywords{Shadowing, Super-shadowing, Superyclicity, Furstenberg family}
	
\section{Introduction}

Consider a metric space $X$ equipped with a metric $d$, and let $T: X \to X$ be a homeomorphism. For a given $\delta > 0$, a sequence $(x_n)_{n \in \mathbb{Z}} \subset X$ is called a $\delta$-pseudotrajectory of $T$ if 
\[
d(T(x_n), x_{n+1}) \leq \delta, \quad \forall n \in \mathbb{Z}.
\]

A central concept in the study of pseudotrajectories is the \emph{shadowing property}, first introduced by Bowen \cite{bowen1975omega} and Sinai \cite{sinai1972gibbs} in the context of compact metric spaces. We say that $T$ has the shadowing property if, for every $\epsilon > 0$, there exists $\delta > 0$ such that any $\delta$-pseudotrajectory $(x_n)_{n}$ is $\epsilon$-shadowed by a true trajectory of $T$. This means there exists a point $q \in X$ satisfying 
\begin{equation}\label{defi-shado}
	d(x_n, T^n(q)) < \epsilon, \quad \forall n \in \mathbb{Z}.
\end{equation}

We say that $T$ has the \emph{limit shadowing property} if, for every sequence $(x_n)_{n \in \mathbb{Z}}$ in $X$ satisfying 
\[
\lim_{|n| \to \infty} d(T(x_n), x_{n+1}) = 0,
\]
there exists $q \in X$ such that 
\begin{equation}\label{def-limshado}
	\lim_{|n| \to \infty} d(x_n, T^n(q)) = 0.
\end{equation}

When $T$ is a continuous map, we can define the notion of \emph{positive shadowing} by replacing $\mathbb{Z}$ with $\mathbb{N}_0 := \mathbb{N} \cup \{0\}$ in (\ref{defi-shado}). Similarly, the \emph{positive limit shadowing property} is defined by considering $n \to \infty$ in (\ref{def-limshado}).

For an excellent exposition of the shadowing property and its applications, we refer the reader to \cite{palmer2000shadowing, pilyugin2006shadowing}.

In the context of linear dynamics, several studies have explored the shadowing property and its variations \cite{ antunes2022chain,bernardes2021shadowing,bernardes2024shadowing, bernardes2025generalized, d2021generalized,maiuriello2022expansivity, ombach1994shadowing}. These works encompass a range of other concepts, including hyperbolicity, expansiveness, structural stability, and chain recurrence.

Under the shadowing property, every pseudotrajectory is strictly constrained to approximate the orbit of some vector. Our shift in perspective: we introduce the super-shadowing property as follows. Let \(X\) be a complex Banach space and \(T:X\rightarrow X\) be a continuous linear operator. For each \( \epsilon > 0 \), there exists \( \delta > 0 \) such that for each \( \delta \)-pseudotrajectory, there exists \( p \in X \) and a sequence of complex numbers \( (\lambda_n)_{n} \) such that \( \| x_n - \lambda_n T^n p \| < \epsilon \). In other words, the pseudotrajectories are approximated by rescalings of the elements of the orbit of some vector. Clearly, this is motivated by the notion of supercyclic operators.

We can distinguish two important notions in linear dynamics: hypercyclicity and supercyclicity. We say that an operator $T: X \to X$ is \emph{hypercyclic} if there exists a vector $x \in X$ such that the $T$-orbit of $x$, given by $\{T^n x : n \in \mathbb{N}_0\}$, is dense in $X$. The set of such vectors is denoted by $\text{HC}(T)$. On the other hand, we say that $T: X \to X$ is a \emph{supercyclic operator} if there exists a vector $x \in X$ such that 
\begin{align*}
\{\lambda \cdot T^n x : n \in \mathbb{N}_0, \, \lambda \in \mathbb{C}\}
\end{align*}
is dense in $X$. The set of such vectors is denoted by $\text{SC}(T)$. In both cases, the separability of $X$ is required. For further details, we refer the reader to \cite{bayart2009dynamics, grosse2011linear}.

Bayart and Grivaux introduced the notion of a \emph{frequently hypercyclic operator} \cite{bayart2004hypercyclicite, bayart2006frequently}. More precisely, we say that $T$ is frequently hypercyclic if there exists a vector $x \in X$ such that, for any non-empty open set $U \subset X$,
\begin{align*}
	\underline{\text{dens}}\{n \geq 0 : T^n x \in U\} > 0.
\end{align*}
Such a vector $x$ is called a \emph{frequently hypercyclic vector} for $T$, and the set of all such vectors is denoted by $\text{FHC}(T)$.

A weaker notion was introduced by Shkarin \cite{shkarin2009spectrum}, called an \emph{upper frequently hypercyclic operator}. We say that $T$ is upper frequently hypercyclic if there exists a vector $x \in X$ such that, for any non-empty open set $U \subset X$,
\begin{align*}
	\overline{\text{dens}}\{n \geq 0 : T^n x \in U\} > 0.
\end{align*}
Such a vector $x$ is called an \emph{upper frequently hypercyclic vector} for $T$, and the set of all such vectors is denoted by $\text{UFHC}(T)$.

Peris \cite{peris2005topologically} introduced the notion of \emph{reiterative hypercyclicity}. We say that $T$ is reiteratively hypercyclic if there exists a vector $x \in X$ such that, for any non-empty open set $U \subset X$,
\[
\overline{\text{Bd}}\{n \geq 0 : T^n x \in U\} > 0.
\]
The point $x$ is then called a \emph{reiteratively hypercyclic point} for $T$, and the set of all such points is denoted by $\text{RHC}(T)$.

Two important results concerning the topological nature of $\text{FHC}$, $\text{UFHC}$, and $\text{RHC}$ are as follows: The first of these sets, $\text{FHC}$, is meager \cite{moothathu2013two}, whereas the latter two, $\text{UFHC}$ and $\text{RHC}$, are either empty or residual \cite{bayart2015difference, bes2016recurrence, bonilla2018upper}. The approach taken in \cite{bonilla2018upper} establishes a Birkhoff-type theorem for upper Furstenberg families.

Recently, Alves, Botelho, and Fávaro introduced the notion of \emph{frequently supercyclic operators} \cite{alves2024frequently}. The key difference from hypercyclicity lies in the perspective: for $x \in X$ and a non-empty open set $U \subset X$, they consider the set
\begin{align*}
	\{n \geq 0 : \mathbb{C} \cdot T^n x \cap U \neq \emptyset\}.
\end{align*}

The paper is structured as follows. In Section \ref{sec2}, we present some basic results regarding the super-shadowing property. In Section 3, we characterize, for compact operators on a complex Banach space, the positive super-shadowing property (see Theorem \ref{compact-1}) and the limit super-shadowing property (see Theorem \ref{compact-2}). In the final section, we show that no surjective isometric operator on a Banach space $X$ with $\dim(X) > 1$ possesses the positive super-shadowing property (see Theorem \ref{iso-no-super}). Additionally, we establish a Birkhoff-type theorem for upper Furstenberg families in the context of supercyclic operators (see Theorem \ref{uff-super}).

\section{The Super-Shadowing Property}\label{sec2}

\begin{definition}
	Let \(T\) be an invertible continuous linear operator on a Banach space \(X\). We say that \(T\) possesses the \emph{super-shadowing property} if, for every \(\epsilon > 0\), there exists \(\delta > 0\) such that for any \(\delta\)-pseudotrajectory \((x_n)_{n \in \mathbb{Z}}\), there exists \(q \in X\) satisfying
	\begin{equation}\label{def.ss}
		\|x_n - \lambda_n T^n q\| < \epsilon \quad \text{for all } n\in \mathbb{Z},
	\end{equation}
	where \((\lambda_n)_{n \in \mathbb{Z}}\) is a sequence of nonzero scalars. 
\end{definition}

In the classical shadowing property, the sequence \((\lambda_n)\) can be chosen as constants, specifically \(\lambda_n = 1\) for all \(n\). By rescaling, it is also possible to select \((\lambda_n)\) as a constant sequence \(\lambda_n = \lambda \neq 0\) for all \(n\).

A natural question arises: Is the existence of a specific sequence \((\lambda_n)_{n \in \mathbb{N}}\), for which given any \(\epsilon > 0\), there exists \(\delta > 0\) such that for every  \(\delta\)- pseudotrajectory \((x_n)_{n \in \mathbb{N}}\), there exists \(p \in X\) satisfying
\[
\|x_n - \lambda_n T^n p\| < \epsilon \quad \text{for all } n,
\]
related to the shadowing property? Indeed, this condition turns out to be equivalent to the classical shadowing property. Let us explore this equivalence in detail.

For \(p\in X\setminus \{0\}\), it is clear that \(T^{n}p\neq 0\) for every \(n\in \mathbb{Z}\). One can notice that the sequence \((T^n p)_{n \in \mathbb{Z}}\) forms a \(\delta\)-pseudotrajectory for any \(\delta > 0\), we can, for each \(\epsilon_\ell = \frac{1}{2^\ell}\), find \(q_\ell \in X\) such that
\[
\|T^n p - \lambda_n T^n q_\ell\| < 2^{-\ell}.
\]
For \(n = 0\), it follows immediately that \(\lambda_0 \neq 0\), and furthermore, \(q_\ell\) converges to \(\frac{p}{\lambda_0}\) as \(\ell \to \infty\). A similar argument shows that \(\lambda_n = \lambda_0\) for all \(n\). Consequently, \(T\) satisfies the shadowing property.

\begin{remark}
	To verify whether an operator \(T\) satisfies the super-shadowing property, it suffices to show that for some \(\epsilon > 0\), there exists \(\delta > 0\) such that for every \(\delta\)-pseudotrajectory \((x_n)_{n}\), there exists \(q \in X\) satisfying condition \eqref{def.ss}. Without loss of generality, we can assume \(\lambda_0 = 1\).
\end{remark}

The shadowing property and limit shadowing have been extensively studied in the literature (see, e.g., \cite{bernardes2018expansivity, pilyugin2006shadowing}). For instance, an invertible operator \(T\) on a Banach space \(X\) has the shadowing property if and only if there exists a constant \(K > 0\) such that for every bounded sequence \((z_n)_{n \in \mathbb{Z}}\) in \(X\), there exists a sequence \((x_n)_{n \in \mathbb{Z}}\) in \(X\) satisfying
\[
\sup_{n \in \mathbb{Z}} \|y_n\| \leq K \sup_{n \in \mathbb{Z}} \|z_n\| \quad \text{and} \quad y_{n+1} = T y_n + z_n, \, \forall n \in \mathbb{Z}.
\]

This result motivates us to establish a similar characterization for the super-shadowing property. To this end, observe that if \((x_n)_{n \in \mathbb{N}}\) is a \(\delta\)- pseudotrajectory, then for any \(p \in X\), the sequence \((x_n - T^n p)_{n \in \mathbb{N}}\) is also a \(\delta\)-pseudotrajectory. If, for some \(p \in X\), we can approximate \((x_n - T^n p)_{n \in \mathbb{N}}\), we say that \(T\) possesses the \emph{weak super-shadowing property}. Let us formalize this notion.

\begin{definition}
	Let \(X\) be a Banach space, and let \(T\) be a continuous linear operator on \(X\). We say that \(T\) possesses the \emph{weak super-shadowing property} if, for every \(\epsilon > 0\), there exists \(\delta > 0\) such that for any \(\delta\)-pseudotrajectory \((x_n)_{n}\), there exist \(p, q \in X\) and a sequence of scalars \((\lambda_n)_{n}\) satisfying
	\[
	\|x_n - T^n p - \lambda_n T^n q\| < \epsilon \quad \text{for all } n.
	\]
	When \(n \in \mathbb{N}_0\), we say that \(T\) has the \emph{positive weak super-shadowing property}. For \(n \in \mathbb{Z}\), we say that \(T\) has the \emph{weak super-shadowing property}.
\end{definition}

\begin{lemma}\label{equiv-wsupershado}
	Let \(T\) be an invertible operator on a Banach space \(X\). Then \(T\) possesses the weak super-shadowing property if and only if there exists a constant \(K > 0\) such that for every bounded sequence \((z_{n})_{n}\) in \(X\), there exist a sequence \((y_{n})_{n} \subset X\), a vector \(q \in X\), and a sequence of complex numbers \((\beta_{n})_{n}\) satisfying:
	\begin{align}\label{ineq-supershado}
		\sup_{n \in \mathbb{Z}} \|y_{n}\| \leq K \sup_{n \in \mathbb{Z}} \|z_{n}\|,\;\text{and}\; y_{n+1} = Ty_{n} + z_{n} + \beta_{n} T^{n+1} q,\, \forall n \in \mathbb{Z}.
	\end{align}
\end{lemma}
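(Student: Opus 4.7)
The plan is to prove this in the same spirit as the classical shadowing characterization referenced in the excerpt, by converting between $\delta$-pseudotrajectories and bounded sequences via the substitution $z_n = x_{n+1} - T x_n$, and handling the extra rescaling term $\lambda_n T^n q$ through a telescoping trick.

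For the forward direction, I would assume the weak super-shadowing property and apply it for a fixed value, say $\epsilon = 1$, obtaining $\delta > 0$. Given a nonzero bounded sequence $(z_n)_{n\in\mathbb{Z}}$ with $M = \sup_n \|z_n\|$, I would rescale to $\tilde z_n = (\delta/M) z_n$ and build a bi-infinite pseudotrajectory by setting $x_0 = 0$, $x_{n+1} = Tx_n + \tilde z_n$ for $n \ge 0$, and $x_{n-1} = T^{-1}(x_n - \tilde z_{n-1})$ for $n \le 0$, using the invertibility of $T$. Then $\|Tx_n - x_{n+1}\| = \|\tilde z_n\| \leq \delta$, so the weak super-shadowing property yields $p, q \in X$ and scalars $(\lambda_n)$ with $\|x_n - T^n p - \lambda_n T^n q\| < 1$. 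Defining $\tilde y_n := x_n - T^n p - \lambda_n T^n q$, a direct computation gives
\[
\tilde y_{n+1} = T\tilde y_n + \tilde z_n + (\lambda_n - \lambda_{n+1}) T^{n+1} q,
\]
so setting $\beta_n := (M/\delta)(\lambda_n - \lambda_{n+1})$ and $y_n := (M/\delta)\tilde y_n$ recovers the desired recurrence with $\sup_n \|y_n\| \leq (1/\delta) M = K \sup_n \|z_n\|$ for $K = 1/\delta$.

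For the reverse direction, given $\epsilon > 0$, I would set $\delta = \epsilon/(2K)$. For a $\delta$-pseudotrajectory $(x_n)$, let $z_n := x_{n+1} - Tx_n$, so $\|z_n\| \le \delta$. The hypothesis provides $(y_n), q, (\beta_n)$ with $\sup_n \|y_n\| \leq K\delta < \epsilon$ and $y_{n+1} = Ty_n + z_n + \beta_n T^{n+1}q$. The key step is to consider $u_n := x_n - y_n$, which satisfies
\[
u_{n+1} = Tu_n - \beta_n T^{n+1} q.
\]
Applying $T^{-(n+1)}$ and letting $w_n := T^{-n} u_n$ collapses this into the scalar recurrence $w_{n+1} = w_n - \beta_n q$ in $X$. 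This telescopes to $w_n = w_0 + \sigma_n q$ for a sequence $(\sigma_n)$ (set $\sigma_n = -\sum_{k=0}^{n-1}\beta_k$ for $n \geq 0$ and $\sigma_n = \sum_{k=n}^{-1}\beta_k$ for $n < 0$). Going back, $x_n = y_n + T^n w_0 + \sigma_n T^n q$, so with $p := w_0$ and $\lambda_n := \sigma_n$ we obtain $\|x_n - T^n p - \lambda_n T^n q\| < \epsilon$, establishing the weak super-shadowing property.

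The main conceptual obstacle is the middle step of the reverse direction: recognizing that after subtracting $y_n$ from $x_n$, the inhomogeneous term $\beta_n T^{n+1} q$ is actually a coboundary for the cocycle generated by $T$ acting trivially on the direction $q$, which is exactly what the change of variables $w_n = T^{-n} u_n$ exploits. The remaining work — verifying the recurrence, checking that the bi-infinite telescoping is well defined (it is, since we only need the partial sums $\sigma_n$ to exist, with no convergence required), and tracking the norm constants — is routine bookkeeping.
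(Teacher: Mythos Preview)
Your proposal is correct and follows essentially the same approach as the paper's proof: both directions use the substitution $z_n = x_{n+1} - Tx_n$ with the rescaling by $M/\delta$, define $y_n$ (up to scaling) as $x_n - T^n p - \lambda_n T^n q$, and unfold the recurrence $y_{n+1} - x_{n+1} = T(y_n - x_n) + \beta_n T^{n+1} q$; your explicit change of variables $w_n = T^{-n} u_n$ just makes transparent what the paper states in one line as ``unfolding this recurrence relation.'' The only cosmetic difference is that the paper appeals to the earlier remark that a single value of $\epsilon$ suffices and works with $\epsilon = 1$, whereas you carry a general $\epsilon$ through.
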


The proof of this result follows a similar line of argumentation as presented in \cite[Lemma 10]{bernardes2018expansivity}.
\begin{proof}[Proof of Lemma \ref{equiv-wsupershado}]
	Suppose that \(T\) satisfies the weak super-shadowing property. Let \(\delta > 0\) be the constant associated with the definition of weak super-shadowing for \(\epsilon = 1\). Consider a bounded sequence \((z_n)_{n \in \mathbb{Z}}\) with \(M = \sup_{n \in \mathbb{Z}} \|z_n\|\). Fix \(x_0 \in X\) and define the sequence \((x_n)_{n \in \mathbb{Z}}\) recursively as 
	\[
	x_{n+1} = Tx_n + \frac{\delta}{M} z_n \quad \text{for all } n \in \mathbb{Z}.
	\] 
	Observe that \((x_n)_{n \in \mathbb{Z}}\) forms a \(\delta\)-pseudotrajectory of \(T\). By the weak super-shadowing property, there exist \(p, q \in X\) and a sequence of complex numbers \((\lambda_n)_{n \in \mathbb{Z}}\) such that $\|x_n - T^n p - \lambda_n T^n q\| < 1$ for all $n\in \mathbb{Z}$.
	
	To verify \eqref{ineq-supershado}, define \(y_n = \frac{M}{\delta} (x_n - T^n p - \lambda_n T^n q)\), $K=\frac{1}{\delta}$ and consider the sequence  \(\beta_n = \frac{M}{\delta}( \lambda_n-\lambda_{n+1})\). This establishes the forward implication.
	
	Conversely, suppose that the condition \eqref{ineq-supershado} holds. Let \(\epsilon = 1\) and set \(\delta = \frac{1}{2K}\). Consider any \(\delta\)-pseudotrajectory \((x_n)_{n \in \mathbb{Z}}\) for \(T\), and define the sequence \((z_n)_{n \in \mathbb{Z}} \subset X\) by \(z_n := x_{n+1} - Tx_n\), which is a bounded sequence. By assumption, there exist a sequence \((y_n)_{n \in \mathbb{Z}} \subset X\), a vector \(q \in X\), and a sequence of complex numbers \((\beta_n)_{n \in \mathbb{Z}}\) such that $\sup_n \|y_n\| \leq K \sup_n \|z_n\| < 1$ and the recursive relation holds, $y_{n+1} = Ty_n + z_n + \beta_n T^{n+1} q$ for all $n\in \mathbb{Z}$.
	
	As a consequence, 
	\begin{align*}
		y_{n+1} - x_{n+1} = T(y_n - x_n) + \beta_n T^{n+1} q,\, \forall n \in \mathbb{Z}.
	\end{align*}
	In other words, there exists a sequence of complex numbers \((\lambda_n)_{n}\) such that
	\[
	y_n - x_n = T^n (y_0 - x_0) - \lambda_n T^{n}q  \quad \text{for all } n \in \mathbb{Z}.
	\]
	Let \(p := x_0 - y_0\). It follows that $\|x_n - T^n p - \lambda_n T^n q\| = \|y_n\| < 1$.
\end{proof}

\begin{definition}
	Let $T$ be an invertible operator on a Banach space $X$. We say that $T$ has the \emph{limit super-shadowing property} if, for every sequence $(x_n)_{n \in \mathbb{Z}} \subset X$ satisfying  \(	\lim_{|n| \to \infty} \|T x_n - x_{n+1}\| = 0\), there exist $q \in X$ and a sequence of scalars $(\lambda_n)_{n \in \mathbb{Z}}$ such that \(	\lim_{|n| \to \infty} \|x_n - \lambda_n T^n q\| = 0\). If, instead, there exist $p, q \in X$ and a sequence of scalars $(\lambda_n)_{n \in \mathbb{Z}}$ such that 
	\[
	\lim_{|n| \to \infty} \|x_n - T^n p - \lambda_n T^n q\| = 0,
	\]
	we say that $T$ has the \emph{limit weak super-shadowing property}.
\end{definition}

\begin{lemma}\label{equi-lwss}
	An invertible operator \( T \) on a Banach space \( X \) has the limit weak super-shadowing property if and only if for every sequence \( (z_n)_{n \in \mathbb{Z}} \subset X \) satisfying \( \lim_{\vert n \vert \to \infty} \|z_n\| = 0 \), there exist a sequence \( (y_n)_{n \in \mathbb{Z}} \subset X \), a vector \( q \in X \), and a sequence of complex scalars \( (\beta_n)_{n \in \mathbb{Z}} \) such that:
	\begin{align*}
		\lim_{\vert n \vert \to \infty} \|y_n\| = 0 \quad \text{and} \quad y_{n+1} = Ty_n + z_n + \beta_n T^{n+1} q, \quad \forall n \in \mathbb{Z}.
	\end{align*}
\end{lemma}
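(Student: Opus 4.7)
The plan is to adapt the proof of Lemma \ref{equiv-wsupershado} essentially verbatim, replacing bounded sequences and sup-norm estimates by sequences tending to zero as $|n| \to \infty$. The algebraic manipulations that convert between a pseudotrajectory $(x_n)$ and its associated error sequence $z_n = x_{n+1} - Tx_n$ are purely formal and work equally well in either setting; only the analytic bookkeeping changes.

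For the forward direction, I would fix a sequence $(z_n)_{n \in \mathbb{Z}}$ with $\|z_n\| \to 0$, pick any $x_0 \in X$, and define $(x_n)_{n \in \mathbb{Z}}$ by the recursion $x_{n+1} = Tx_n + z_n$ (using invertibility of $T$ to run the recursion for negative $n$). Then $\|Tx_n - x_{n+1}\| = \|z_n\| \to 0$, so the limit weak super-shadowing property yields $p, q \in X$ and scalars $(\lambda_n)$ with $\|x_n - T^n p - \lambda_n T^n q\| \to 0$. Setting $y_n := x_n - T^n p - \lambda_n T^n q$ and $\beta_n := \lambda_n - \lambda_{n+1}$, a direct computation gives
\[
y_{n+1} - Ty_n = x_{n+1} - Tx_n + (\lambda_n - \lambda_{n+1}) T^{n+1} q = z_n + \beta_n T^{n+1} q,
\]
and by construction $\|y_n\| \to 0$ as $|n| \to \infty$.

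For the converse, given $(x_n)$ with $\|Tx_n - x_{n+1}\| \to 0$, set $z_n := x_{n+1} - Tx_n$, which satisfies $\|z_n\| \to 0$. By hypothesis we obtain $(y_n)$, $q$, and $(\beta_n)$ with $\|y_n\| \to 0$ and $y_{n+1} = Ty_n + z_n + \beta_n T^{n+1} q$. Subtracting the defining relation of $z_n$ gives
\[
y_{n+1} - x_{n+1} = T(y_n - x_n) + \beta_n T^{n+1} q,
\]
and iterating this recursion (forward for $n \geq 0$, backward using $T^{-1}$ for $n < 0$) produces a scalar sequence $(\lambda_n)$ with $y_n - x_n = T^n(y_0 - x_0) - \lambda_n T^n q$ for all $n \in \mathbb{Z}$. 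Setting $p := x_0 - y_0$ yields $\|x_n - T^n p - \lambda_n T^n q\| = \|y_n\| \to 0$ as $|n| \to \infty$, which is precisely limit weak super-shadowing.

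The proof contains no genuine obstacle: the invertibility of $T$ is exactly what is needed to extend the recursion to negative indices and to solve $w_{n+1} = Tw_n + \beta_n T^{n+1} q$ for $w_n$ on $\mathbb{Z}$, and the transfer between $\|\cdot\| \to 0$ conditions is immediate. The only point worth care is the explicit choice $\beta_n = \lambda_n - \lambda_{n+1}$ in the forward direction (and, symmetrically, reading off $\lambda_n$ from the partial sums of $\beta_n$ in the converse), since this is where the reduction to the single vector $q$ hides a telescoping identity.
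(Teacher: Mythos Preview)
Your proposal is correct and matches the paper's own proof essentially line for line: both directions proceed by the same recursion $x_{n+1}=Tx_n+z_n$, the same definition $y_n=x_n-T^np-\lambda_nT^nq$ with $\beta_n=\lambda_n-\lambda_{n+1}$ in the forward direction, and the same unfolding of $y_{n+1}-x_{n+1}=T(y_n-x_n)+\beta_nT^{n+1}q$ in the converse. The paper even explicitly refers back to Lemma~\ref{equiv-wsupershado} for the algebraic step, exactly as you anticipated.
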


\begin{proof}
	Suppose \( T \) satisfies the limit weak super-shadowing property. Let \( (z_n)_{n \in \mathbb{Z}} \subset X \) be a sequence such that \( \lim_{\vert n \vert \to \infty} \|z_n\| = 0 \). Fix \( x_0 \in X \) and define a sequence \( (x_n)_{n \in \mathbb{Z}} \) recursively by \( x_{n+1} = Tx_n + z_n \). Notice that \( \|x_{n+1} - Tx_n\| = \|z_n\| \to 0 \) as \( \vert n \vert \to \infty \). By the limit weak super-shadowing property, there exist \( p, q \in X \) and a sequence of complex scalars \( (\lambda_n)_{n \in \mathbb{Z}} \) such that:
	\begin{align*}
		\lim_{\vert n \vert \to \infty} \|x_n - T^n p - \lambda_n T^n q\| = 0.
	\end{align*}
	Define \( y_n := x_n - T^n p - \lambda_n T^n q \). By construction, \( \lim_{\vert n \vert \to \infty} \|y_n\| = 0 \). As in Lemma~\ref{equiv-wsupershado}, it is possible to find a sequence of complex scalars \( (\beta_n)_{n \in \mathbb{Z}} \) such that \( y_{n+1} = Ty_n + z_n + \beta_n T^{n+1} q \). 
	
	Conversely, assume the condition in the statement of the lemma holds. Consider any sequence \( (x_n)_{n \in \mathbb{Z}} \) such that \( \|x_{n+1} - Tx_n\| \to 0 \) as \( \vert n \vert \to \infty \). Define \( z_n := x_{n+1} - Tx_n \), which satisfies \( \lim_{\vert n \vert \to \infty} \|z_n\| = 0 \). By hypothesis, there exist \( q \in X \), a sequence \( (y_n)_{n \in \mathbb{Z}} \subset X \) and a sequence of complex scalars \( (\beta_n)_{n \in \mathbb{Z}} \) such that $\lim_{\vert n \vert \to \infty} \|y_n\| = 0$ and $y_{n+1} = Ty_n + z_n + \beta_n T^{n+1} q$.
	
	Thus, for each \( n \in \mathbb{Z} \), we have:
	\begin{align*}
		y_{n+1} - x_{n+1} = T(y_n - x_n) + \beta_n T^{n+1} q.
	\end{align*}
	Unfolding this recurrence relation, we have \( y_n - x_n = T^n (y_0 - x_0) - \lambda_n T^n q \) for each \( n \in \mathbb{Z} \) for some sequence of complex numbers \( (\lambda_{n})_{n} \). Put \( p = x_0 - y_0 \). Consequently, \( \|x_n - T^n p - \lambda_n T^n q\| = \|y_n\| \) converges to 0 as \( \vert n \vert \) tends to infinity. This completes the proof.
\end{proof}

Recall that an operator $T$ on a Banach space is said to be \emph{hyperbolic} if its spectrum \(\sigma(T)\) does not intersect the unit circle \(\mathbb{T}\). 

If \( T \) is an invertible hyperbolic operator, the space \( X \) admits a decomposition \( X = E \oplus F \), where \( E \) and \( F \) are \( T \)-invariant closed subspaces of \( X \). There exist constants \( \gamma \in (0,1) \) and \( C > 1 \) such that:
\begin{align}
	\label{descom1}	E & = \{y \in X : \|T^{n}y\| \leq C \gamma^{n} \|y\| \; \text{for all} \; n \in \mathbb{N}\},\ \\
	\label{descom2}	F & = \{z \in X : \|T^{-n}z\| \leq C \gamma^{n} \|z\| \; \text{for all} \; n \in \mathbb{N}\}.
\end{align}

Nilson Bernardes and Ali Messaoudi obtained equivalences for an invertible hyperbolic operator; see, for instance, \cite[Theorem 1]{bernardes2021shadowing}.

The following statement helps to distinguish the weak super-shadowing property from the super-shadowing property.

\begin{proposition}\label{weak-no super}
	Let \( X \) be a Banach space, and let \( T \in \mathcal{L}(X) \) be an invertible operator with $\mathrm{dim}(X)\geq 1$. If $T$ is hyperbolic then the operator \( S\in \mathcal{L}(X \oplus \mathbb{C}) \), defined by \(S(x, y) = (T x, \beta y) \) with $\beta\in \mathbb{T}$ has weak super-shadowing property but does not possess the super-shadowing property.
\end{proposition}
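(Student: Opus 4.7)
\textbf{Proof plan for Proposition~\ref{weak-no super}.} I treat the two assertions separately.

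For the weak super-shadowing of $S$, the plan is to apply Lemma~\ref{equiv-wsupershado} with the distinguished vector $q := (0,1) \in X \oplus \mathbb{C}$, so that $S^{n+1}q = (0,\beta^{n+1})$. Since $T$ is hyperbolic it has the shadowing property, and the Cauchy-equation characterization recalled just before Lemma~\ref{equiv-wsupershado} provides a constant $K>0$ such that every bounded sequence $(a_n)_n \subset X$ admits $(u_n)_n$ with $u_{n+1} = Tu_n + a_n$ and $\sup_n \|u_n\| \leq K \sup_n \|a_n\|$. For a bounded $z_n = (a_n,c_n)$ in $X \oplus \mathbb{C}$, take such $(u_n)$ for $(a_n)$ and set $w_n := (u_n,0)$ and $\beta_n := -c_n\,\beta^{-(n+1)}$. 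Because $|\beta|=1$, $|\beta_n|=|c_n|$, hence $\sup_n\|w_n\|\le K\sup_n\|z_n\|$; a direct check gives $w_{n+1} = S w_n + z_n + \beta_n S^{n+1}q$, and Lemma~\ref{equiv-wsupershado} yields the conclusion.

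To refute super-shadowing I exhibit an obstructive pseudotrajectory. Pick $x_0 \in X \setminus\{0\}$ lying in the unstable subspace $F$ whenever $F \neq \{0\}$, and arbitrary otherwise (so $x_0 \in E = X$). For each $\delta>0$ consider
\[
(x_n,y_n) := (T^n x_0, \, n\delta\beta^n), \qquad n \in \mathbb{Z},
\]
which is a $\delta$-pseudotrajectory of $S$ since its error equals $(0,\delta\beta^{n+1})$ of norm $\delta$. Suppose toward a contradiction that $S$ has the super-shadowing property; fix any $\epsilon>0$ and let $\delta$ be the associated constant. Then there exist $(q_1,q_2) \in X \oplus \mathbb{C}$ and nonzero scalars $(\lambda_n)$ with $\|T^n x_0 - \lambda_n T^n q_1\| < \epsilon$ and, after factoring out $\beta^n$ from the $\mathbb{C}$-coordinate, $|n\delta - \lambda_n q_2| < \epsilon$ for every $n \in \mathbb{Z}$. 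The second inequality forces $q_2 \neq 0$ and $|\lambda_n|\to\infty$ as $|n|\to\infty$.

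For the first inequality I choose the asymptotic direction in which $T^n$ expands the component of $x_0$. If $x_0 \in F$, apply the bounded projection $\pi_F$ (which commutes with $T$) and use \eqref{descom2} in the direction $n\to+\infty$: this yields $\|x_0 - \lambda_n \pi_F(q_1)\| \leq C\gamma^n\|\pi_F\|\epsilon \to 0$. If instead $F = \{0\}$, then $q_1 \in X = E$ and \eqref{descom1} used in the direction $n\to-\infty$ gives $\|x_0 - \lambda_n q_1\| \leq C\gamma^{|n|}\epsilon \to 0$. In either case $x_0 \neq 0$ forces the relevant projection of $q_1$ to be nonzero, and then $(\lambda_n)$ must remain bounded along that direction, contradicting $|\lambda_n|\to\infty$. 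The main delicacy is precisely the dichotomy between $F\neq\{0\}$ and $F=\{0\}$, which determines the correct choice of $x_0$ and the asymptotic direction $n\to\pm\infty$; once this is set up, the incompatibility between linear drift in the $\mathbb{C}$-coordinate and bounded-scalar approximation of a true $T$-orbit makes the contradiction immediate.
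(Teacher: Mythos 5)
Your proof is correct and closely parallels the paper's, but the two halves are organized a bit differently and yours patches a small gap. For the weak super-shadowing, the paper argues directly: since $T$ has shadowing, it picks $a$ with $\|x_n - T^n a\|<\epsilon$, then writes $(x_n,y_n)\approx S^n(a,0)+\lambda_n S^n(0,b)$ with $\lambda_n = y_n/(\beta^n b)$ — no lemma needed. You instead verify the cohomological characterization of Lemma~\ref{equiv-wsupershado} with the fixed vector $q=(0,1)$ and $\beta_n=-c_n\beta^{-(n+1)}$; both routes are sound, yours is marginally heavier but makes the role of the single ``correcting'' vector $q$ very explicit. For the negative part, the paper uses the pseudotrajectory $(T^n p,\delta\beta^n\omega_n)$ with an integer-valued $\omega_n$ of step $1$ unbounded in both directions, and argues in two steps (first $a\in F$, then unboundedness of $T^n(p-\lambda_n a)$); it brushes the $F=\{0\}$ case under an unexplained ``WLOG $F\neq\{0\}$.'' Your version uses the simpler drift $n\delta\beta^n$, projects onto $F$ and lets $n\to+\infty$ when $F\neq\{0\}$, and — the genuine improvement — when $F=\{0\}$ it switches to $E=X$ and the direction $n\to-\infty$, so the dichotomy is handled honestly instead of being waved away. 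In both cases the incompatibility between $|\lambda_n|\to\infty$ (forced by the $\mathbb{C}$-coordinate with $q_2\neq 0$) and the convergence $\lambda_n\,\pi(q_1)\to x_0$ (forced by hyperbolic contraction along the chosen direction) yields the contradiction, which is exactly the mechanism the paper exploits.
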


\begin{proof}
	Let \( T \) be an invertible hyperbolic operator on a Banach space \( X \) with \( \mathrm{dim}(X) \geq 1 \). Consider operator \( S(x, y) = (Tx, \beta y) \) for \( (x, y) \in X \times \mathbb{C} \) and \( \beta \in \mathbb{T} \).
	
	Let $(x_{n}, y_{n})_{n}$ be a $\delta$-pseudotrajectory of $S$. Then, there exists $a \in X$ such that $\Vert x_{n} - T^{n}(a) \Vert < \epsilon$ since $T$ has the shadowing property. We choose a complex number $b \neq 0$. As consequence,
	\begin{align*}
		\left\Vert(x_{n}, y_{n})-S^{n}(a,0)-\left(\frac{y_{n}}{b}\right)S^{n}(0, b)\right\Vert<\epsilon\hspace{0.3cm} \forall n.
	\end{align*}
	
	Now, we will show that \( S \) does not have the super-shadowing property. To establish this, consider the decomposition \( E \oplus F = X \) as mentioned earlier, satisfying (\ref{descom1}) and (\ref{descom2}). Without loss of generality, we may assume that \( F \neq \{0\} \).
	
	Suppose the operator \( S\) has the super-shadowing property. We will derive a contradiction. In this way, let \( \delta > 0 \) such that every \( \delta \)-pseudotrajectory for \( S \) is \( 1 \)-super-shadowable.
	
	Consider a \( \delta \)-pseudotrajectory \( (T^{n}p, \beta^{n}\delta \omega_{n}) \), where \( p \in F \setminus \{0\} \) and \( (\omega_{n})_{n} \) is a sequence of integers such that \( (\omega_{n})_{n>0} \), \( (\omega_{n})_{n<0} \) are unbounded and \( |\omega_{n+1} - \omega_{n}| = 1 \). Then, there exists \( (a,b) \in X \times \mathbb{C} \) and a sequence of complex numbers \( (\lambda_{n})_{n} \) such that for each \( n \in \mathbb{Z} \):
	\begin{align*}
		\| (T^{n}p, \beta^{n}\delta \omega_{n}) - \lambda_{n} (T^{n}a, \beta^{n}b) \| < 1.
	\end{align*}
	
	Thus, \( \|T^{n}p - \lambda_{n}T^{n}a\| < 1 \) and \( \|\delta \omega_{n} - \lambda_{n}b\| < 1 \) for each \( n \in \mathbb{Z} \). From the first inequality, it follows that \( a \neq 0 \), and from the second inequality, it follows that \( b \neq 0 \), which implies that \( (\lambda_n)_{n<0} \) and \( (\lambda_n)_{n>0} \) are unbounded.
	
	We claim that \( a \in F \). Suppose the contrary, i.e., \( a = \xi_{1} + \xi_{2} \) with \( \xi_{1} \in E \setminus \{0\} \) and \( \xi_{2} \in F \). Then, \( \| \lambda_{n} T^{n} \xi_{1} \| < \| \mathbb{P} \| \) for each \( n \in \mathbb{Z} \), where \( \mathbb{P} \) is the canonical projection of \( X \) onto \( E \).
	
	Using (\ref{descom1}), we obtain for \( n < 0 \):
	\begin{align*}
		\| \mathbb{P} \| > \| \lambda_{n} T^{n} \xi_{1} \| \geq \frac{1}{C} \gamma^{n} \| \lambda_{n} \| \| \xi_{1} \|.
	\end{align*}
	Clearly, the expression on the right-hand side of the inequality above is unbounded, leading to a contradiction.
	
	Since \( a \in F \), we have \( p - \lambda_{n} a \in F \) and \( \lambda_{n} a \neq p \) for infinitely many \( n > 0 \). Therefore, for these indices, we observe that
	\begin{align*}
		\frac{1}{C} \gamma^{-n} (\| \lambda_{n} a \| - \| p \|) < \| T^{n} (p - \lambda_{n} a) \| < 1.
	\end{align*}
	Clearly, the expression on the left-hand side is unbounded for infinitely many \( n \), leading to a contradiction. Thus, we conclude that \( S \) does not have the super-shadowing property.
\end{proof}

\begin{proposition}\label{lwss-no lss}
	Let $S$ be as defined in Proposition~\ref{weak-no super}. Then $S$ has the limit weak super-shadowing property but does not have the limit super-shadowing property.
\end{proposition}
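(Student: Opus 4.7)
The plan is to mirror the structure of Proposition~\ref{weak-no super}, upgrading each estimate from the uniform ``$<\epsilon$'' regime to the asymptotic ``$\to 0$'' regime. Both directions again split cleanly into controlling the $X$-component via a genuine $T$-orbit and handling the $\mathbb{C}$-component by rescaling the orbit of the rotation by $\beta$.

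For the limit weak super-shadowing property, let $(x_n, y_n)_{n\in\mathbb{Z}} \subset X\times\mathbb{C}$ satisfy $\|S(x_n,y_n) - (x_{n+1},y_{n+1})\| \to 0$ as $|n|\to\infty$, equivalently $\|Tx_n - x_{n+1}\| \to 0$ together with $|\beta y_n - y_{n+1}| \to 0$. Since $T$ is invertible and hyperbolic it enjoys the limit shadowing property (see, e.g., \cite[Theorem 1]{bernardes2021shadowing}), so there is $a\in X$ with $\|x_n - T^n a\| \to 0$. Fixing any nonzero $b \in \mathbb{C}$ and setting $\lambda_n := y_n/(\beta^n b)$, a direct computation gives
\[
(x_n, y_n) - S^n(a,0) - \lambda_n S^n(0,b) = (x_n - T^n a,\, 0) \longrightarrow 0,
\]
which is the limit weak super-shadowing property.

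For the negative direction I argue by contradiction, constructing an asymptotic pseudotrajectory analogous to the one used in Proposition~\ref{weak-no super}. Choose $p \in F \setminus \{0\}$ together with a real sequence $(c_n)_n$ satisfying $|c_n| \to \infty$ as $|n|\to\infty$ and $|c_{n+1} - c_n| \to 0$; for instance $c_n := \mathrm{sgn}(n)\sqrt{|n|}$. Then $(T^n p, \beta^n c_n)_n$ has zero $X$-defect and $\mathbb{C}$-defect $|\beta^{n+1}c_n - \beta^{n+1}c_{n+1}| = |c_n - c_{n+1}|\to 0$, so it satisfies the hypothesis of the limit super-shadowing property. If $S$ had that property, there would exist $(a,b)\in X\times\mathbb{C}$ and scalars $(\lambda_n)_n$ with $\|T^n p - \lambda_n T^n a\|\to 0$ and $|c_n - \lambda_n b|\to 0$; since $|c_n|\to\infty$ this forces $b\neq 0$ and $|\lambda_n|\to\infty$ in both directions.

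The main obstacle is to push the hyperbolic splitting argument of Proposition~\ref{weak-no super} through these vanishing limits, using only $|\lambda_n|\to\infty$. Decompose $a=\xi_1+\xi_2$ with $\xi_1\in E$, $\xi_2\in F$, and apply the canonical projection $\mathbb{P}\colon X\to E$ to $T^n p - \lambda_n T^n a$: since $p\in F$, this yields $\|\lambda_n T^n \xi_1\|\to 0$. Combined with the lower bound $\|T^n \xi_1\|\geq C^{-1}\gamma^{-|n|}\|\xi_1\|$ for $n\to -\infty$ derived from (\ref{descom1}), this forces $\xi_1=0$, hence $a\in F$. Then for $n\to +\infty$ we have $p-\lambda_n a\in F$ and the symmetric expanding estimate on $F$ gives $\|T^n(p-\lambda_n a)\|\geq C^{-1}\gamma^{-n}\|p-\lambda_n a\|$; whether $a=0$ (so $\|p-\lambda_n a\|=\|p\|>0$) or $a\neq 0$ (so $\|p-\lambda_n a\|\geq |\lambda_n|\|a\|-\|p\|\to\infty$), the right-hand side tends to infinity, contradicting $\|T^n p - \lambda_n T^n a\|\to 0$. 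This contradiction shows that $S$ does not possess the limit super-shadowing property.
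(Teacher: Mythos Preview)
Your proof is correct and follows essentially the same route as the paper's: the negative direction builds an asymptotic pseudotrajectory of the form $(T^{n}p,\beta^{n}c_{n})$ with $p\in F\setminus\{0\}$ and $|c_{n}|\to\infty$, $|c_{n+1}-c_{n}|\to 0$ (the paper takes $c_{n}=\sum_{k=1}^{|n|}k^{-1}$ rather than your $\mathrm{sgn}(n)\sqrt{|n|}$), and the contradiction is obtained identically through the hyperbolic splitting and the expanding estimate on $F$. You also spell out the positive direction and the case $a=0$, both of which the paper leaves implicit.
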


\begin{proof}
	To show that \( S \) does not satisfy the limit super-shadowing property, consider the sequence \( z_{n} := (T^{n}p, \theta_{n}\beta^{n})_{n} \), where \( p \in F \), \( \theta_{0} = 0 \), and \( \theta_{n} = \sum_{k=1}^{|n|}k^{-1} \). Clearly, \( \lim \| S z_{n} - z_{n+1} \| = 0 \). If \( S \) had the limit super-shadowing property, then there would exist \( (a, b) \in X \times \mathbb{C} \) and a sequence of complex numbers \( (\lambda_{n})_{n} \) such that
	\begin{align*}
		\lim_{|n| \to \infty} \| (T^{n}p, \theta_{n}\beta^{n}) - \lambda_{n}(T^{n}a, \beta^{n}b) \| = 0.
	\end{align*}
	Certainly, \( b \neq 0 \), which implies that \( \lim \frac{\lambda_{n}}{\theta_{n}} = \frac{1}{b} \). Similarly, it can be shown that \( a \in F \setminus \{0\} \). Therefore, for sufficiently large \( n > 0 \), 
	\begin{align*}
		\| T^{n}(p - \lambda_{n}a) \| \geq \frac{1}{C} \gamma^{-n}(\| \lambda_{n}a \| - \| p \|).
	\end{align*}
	The contradiction arises from noting that the left-hand side converges to \( 0 \), while the right-hand side is unbounded.
\end{proof}

\begin{theorem}\label{super sha- no sha}
	Each Banach space \( X \) with \( \mathrm{dim}(X) \geq 1 \) admits a continuous linear operator on \( X \) that has the positive super-shadowing property but not the positive shadowing property.
\end{theorem}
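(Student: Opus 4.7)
The plan is to exhibit a rank-one idempotent operator that has positive super-shadowing but fails positive shadowing. Choose a unit vector $v \in X$ and, via Hahn-Banach, a continuous linear functional $\phi \in X^*$ with $\phi(v) = 1$; set $T(x) := \phi(x) v$. Since $T^2 = T$, we have $T^n = T$ for every $n \geq 1$. When $\dim X = 1$ this reduces to $T = I$, which handles that case directly.

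To rule out positive shadowing, I would consider the drifting sequence $x_n := n\delta v$ for $n \geq 0$. A direct computation gives $\|T x_n - x_{n+1}\| = \delta$, so $(x_n)_{n \geq 0}$ is a genuine $\delta$-pseudotrajectory. However, for $n \geq 1$ one has $T^n q = \phi(q) v$, which is constant in $n$, so $\|x_n - T^n q\| = |n\delta - \phi(q)|$ is unbounded in $n$; hence for any fixed $\epsilon > 0$ no single $q \in X$ can $\epsilon$-shadow this pseudotrajectory, and $T$ fails positive shadowing.

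For the positive super-shadowing property, fix $\epsilon > 0$ and set $\delta := \epsilon/2$. For any $\delta$-pseudotrajectory $(x_n)_{n \geq 0}$, the defining inequality reads $\|x_{n+1} - \phi(x_n) v\| \leq \delta$ for every $n \geq 0$, so every $x_n$ with $n \geq 1$ lies within $\delta$ of the line $\mathbb{C} v$. Pick $\mu \in \mathbb{C}$ with $|\mu| < \epsilon/2$ and $\phi(x_0) + \mu \neq 0$, set $q := x_0 + \mu v$, and write $c := \phi(q) \neq 0$, so that $T^n q = c v$ for every $n \geq 1$. Setting $\lambda_0 := 1$ gives $\|x_0 - \lambda_0 T^0 q\| = |\mu| < \epsilon/2$; for each $n \geq 1$, I would choose a nonzero scalar $\lambda_n$ with $|\lambda_n c - \phi(x_{n-1})| < \delta$ and bound
\[
\|x_n - \lambda_n T^n q\| \leq \|x_n - \phi(x_{n-1}) v\| + |\lambda_n c - \phi(x_{n-1})| \leq 2\delta = \epsilon.
\]

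The delicate point is the definitional requirement that every $\lambda_n$ be nonzero while the single vector $q$ must simultaneously satisfy $\phi(q) \neq 0$ (so that $T^n q$ provides a useful scaling direction for $n \geq 1$) and approximate $x_0$ up to a scalar at time $n = 0$. Both constraints dissolve with the perturbation trick $q = x_0 + \mu v$: it forces $c = \phi(q) \neq 0$ without moving $q$ more than $\epsilon/2$ from $x_0$, and any $\lambda_n$ that would otherwise be zero (for instance when $\phi(x_{n-1}) = 0$) can be nudged by an arbitrarily small nonzero amount without breaking the final estimate.
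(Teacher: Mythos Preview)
Your proof is correct and uses essentially the same operator as the paper: a rank-one idempotent projection onto a one-dimensional subspace (your $T(x)=\phi(x)v$ is exactly the paper's projection $M\oplus N\to N$ written via Hahn--Banach). The super-shadowing argument is the same idea expressed in functional notation; you are in fact slightly more careful than the paper in enforcing that each $\lambda_n$ be nonzero, and you supply a direct drifting-sequence argument for the failure of positive shadowing where the paper merely cites an external reference.
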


\begin{proof}
	Let \( X \) be a Banach space with \( \mathrm{dim}(X) \geq 1 \). We distinguish two cases: \( \mathrm{dim}(X) = 1 \) and \( \mathrm{dim}(X) > 1 \). For the first case, consider the identity map on \( X \). For the second case, consider a one-dimensional subspace \( N \subset X \) and its topological complement \( M \), and define the operator \( T: X \to X \) by
	\begin{align*}
		T: M \oplus N &\to M \oplus N, \\
		x + y &\mapsto y.
	\end{align*}
	
	Denoted $\mathbb{P}$ the projection operator of $X$ on $M$. According to \cite{bernardes2024shadowing} $T$ do not has the shadowing shadowing property. Now, we will show that \( T \) has the positive super-shadowing property. For any \( \epsilon > 0 \), let \( \delta = \epsilon / (2 \Vert{\mathbb{P}\Vert})\) and consider \( (z_n)_n \) a \( \delta \)-pseudotrajectory for \( T \).
	
	We can observe that there exist sequences \( (x_n)_n \subset M \) and \( (y_n)_n \subset N \) such that \( z_n = x_n + y_n \) for each \( n \in \mathbb{N} \). Thus, \( z_{n+1} - T z_n = x_{n+1} + y_{n+1} - y_n \) for each \( n \geq 0 \). Therefore,
	\[
	\|x_{n+1}\|=\|\mathbb{P}(T z_n - z_{n+1})\| \leq \epsilon/2, \forall n\geq 0.
	\]
	In particular, \( \|x_n\| < \epsilon \) for each \( n \geq 1 \). Now, let us construct a vector that \( \epsilon \)-super-shadow \( (z_n)_n \). Specifically, let \( z = x_0 + \nu \), where \( \nu \in N \setminus \{0\} \) and \( \| \nu - y_0 \| < \epsilon \). Then \( \| z_0 - z \| = \| y_0 - \nu \| < \epsilon \). For \( n \geq 1 \), choose \( \lambda_n \) such that \( y_n = \lambda_n \nu \). Then
	\[
	\| z_n - \lambda_n T^n z \| = \| x_n + y_n - \lambda_n \nu \| = \| x_n \| < \epsilon.
	\]
	This completes the proof.
\end{proof}

\begin{proposition}\label{T-inv}
	Let \( T \) be an invertible operator on a Banach space \( X \), and suppose \( X = M \oplus N \), where \( M \) and \( N \) are closed \( T \)-invariant subspaces of \( X \). If \( T \) has the super-shadowing property (or the limit super-shadowing property), then \( T|_M \) and \( T|_N \) also have the same property. 
\end{proposition}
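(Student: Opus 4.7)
The plan is to use the continuous projections \(\pi_M, \pi_N\) associated with the decomposition \(X = M \oplus N\) to transfer (limit) super-shadowing from \(T\) down to each summand. Set \(K := \max\{\|\pi_M\|, \|\pi_N\|\}\). Two preparatory observations would come first. Since \(T\) is invertible and \(TM \subset M\), the inverse \(T^{-1}\) also preserves \(M\): if \(m \in M\) and \(T^{-1}m = a+b\) with \(a\in M\), \(b\in N\), then \(m = Ta + Tb\), and uniqueness of the direct sum forces \(Tb = 0\), hence \(b = 0\). The same holds for \(N\), so \(T|_M\) and \(T|_N\) are genuinely invertible. Secondly, the invariance of both \(M\) and \(N\) gives \(\pi_M \circ T = T \circ \pi_M\), and hence \(\pi_M \circ T^n = T^n \circ \pi_M\) for every \(n \in \mathbb{Z}\); likewise for \(\pi_N\). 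These are the only structural facts I will need.

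For the super-shadowing half, given \(\epsilon > 0\), I would apply the super-shadowing property of \(T\) to \(\epsilon/(2K)\) to obtain some \(\delta > 0\). A \(\delta\)-pseudotrajectory \((x_n)_n \subset M\) of \(T|_M\) is automatically a \(\delta\)-pseudotrajectory of \(T\), so there exist \(q \in X\) and nonzero scalars \((\lambda_n)_n\) with \(\|x_n - \lambda_n T^n q\| < \epsilon/(2K)\) for all \(n\). Applying \(\pi_M\), and using \(\pi_M x_n = x_n\) together with the commutation, yields
\[
\|x_n - \lambda_n T^n(\pi_M q)\| \leq K \cdot \frac{\epsilon}{2K} = \frac{\epsilon}{2} < \epsilon,
\]
so \(\pi_M q \in M\) is the desired shadowing vector whenever it is nonzero. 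The treatment of \(T|_N\) is symmetric.

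The main wrinkle---and really the only non-routine one---is the degenerate case \(\pi_M q = 0\): the previous inequality then reads \(\|x_n\| < \epsilon/2\) for every \(n\), which does not directly exhibit a vector of \(M\) playing the role required by the definition (whose scalars must be nonzero). Assuming \(M \neq \{0\}\) (the case \(M = \{0\}\) is vacuous), I would patch this by fixing any \(q' \in M \setminus \{0\}\) and choosing, for each \(n \in \mathbb{Z}\), a nonzero scalar \(\lambda'_n\) with \(|\lambda'_n|\, \|T^n q'\| < \epsilon/2\); the triangle inequality then gives \(\|x_n - \lambda'_n T^n q'\| < \epsilon\). For the limit super-shadowing half, no such wrinkle appears, because that definition permits the scalars to vanish: given \((x_n)_n \subset M\) with \(\|Tx_n - x_{n+1}\| \to 0\) as \(|n| \to \infty\), apply limit super-shadowing of \(T\) to get \(q \in X\) and \((\lambda_n)_n\) with \(\|x_n - \lambda_n T^n q\| \to 0\); projecting and commuting as above yields \(\|x_n - \lambda_n T^n(\pi_M q)\| \to 0\), so \(\pi_M q \in M\) works. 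Once more \(T|_N\) is symmetric.
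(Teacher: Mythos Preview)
Your proof is correct and follows essentially the same projection argument as the paper: apply super-shadowing of \(T\) with a rescaled tolerance, then push the shadowing vector into \(M\) via \(\pi_M\) using the commutation \(\pi_M T^n = T^n \pi_M\). Your degenerate-case patch for \(\pi_M q = 0\) is harmless but unnecessary, since the definition constrains only the scalars \(\lambda_n\) to be nonzero, not the vector \(q\); taking \(q = 0 \in M\) together with the original nonzero scalars already does the job.
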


\begin{proof}
	Let \( \mathbb{P}: X \to M \) be the canonical projection operator. Suppose \( T \) has the super-shadowing property. For \( \epsilon > 0 \), let \( \delta > 0 \) be the corresponding constant such that every \( \delta \)-pseudotrajectory for \( T \) is \( \frac{\epsilon}{\|\mathbb{P}\|} \)-super-shadowable.
	
	Now, let \( (x_n)_{n} \subset M \) be a \( \delta \)-pseudotrajectory for \( T|_M \). Since \( M \) is \( T \)-invariant, \( (x_n)_n \) is also a \( \delta \)-pseudotrajectory for \( T \). By hypothesis, there exist \( a = p + q \in X \), where \( p \in M \) and \( q \in N \), and a sequence of complex scalars \( (\lambda_n)_{n} \) such that:
	\begin{align*}
		\|x_n - \lambda_n T^n a\| < \frac{\epsilon}{\|\mathbb{P}\|}, \quad \forall n.
	\end{align*}
	Applying the projection \( \mathbb{P} \), we have:
	\[
	\|x_n - \lambda_n T^n p\| \leq \|\mathbb{P}\| \|x_n - \lambda_n T^n a\| < \epsilon, \quad \forall n.
	\]
	Thus, \( T|_M \) has the super-shadowing property. A similar argument applies to \( T|_N \).
	
	To prove the statement for the limit super-shadowing property, assume \( T \) has this property. Let \( (x_n)_{n} \subset M \) be a sequence with \( \|Tx_n - x_{n+1}\|\rightarrow 0\). By hypothesis, there exist \( a = p + q \in X \), where \( p \in M \) and \( q \in N \), and a sequence of complex scalars \( (\lambda_n)_{n} \) such that:
	\[
	\|x_n - \lambda_n T^n a\| \xrightarrow[\vert{n\vert}\rightarrow \infty \;]{} 0.
	\]
	Since \( \mathbb{P}(x_n - \lambda_n T^n a) = x_n - \lambda_n T^n p \), we have:
	\[
	\|x_n - \lambda_n T^n p\| \leq \|\mathbb{P}\| \|x_n - \lambda_n T^n a\|  \xrightarrow[\vert{n\vert}\rightarrow \infty \;]{} 0.
	\]
	This shows that \( T|_M \) has the limit super-shadowing property. A similar argument applies to \( T|_N \), completing the proof.
\end{proof}

\begin{remark}
	The preceding proposition remains valid for a continuous operator $T$ satisfying the positive (limit) super-shadowing property.
\end{remark}

\begin{proposition}
	For any $T\in \mathcal{L}(X)$, the following assertions are equivalent:
	\begin{enumerate}
		\item $T$ has the (weak) super-shadowing property
		\item  $T^{k}$  has the (weak) super-shadowing property for some $k\in \mathbb{N}$
		\item $T^{k}$ has the (weak) super-shadowing property for each $k\in \mathbb{N}$.
	\end{enumerate}
\end{proposition}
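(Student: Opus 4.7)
The plan is to establish the chain $(3)\Rightarrow(2)\Rightarrow(1)\Rightarrow(3)$, with $(3)\Rightarrow(2)$ trivial. Both nontrivial directions use the classical subsampling/interpolation trick from the shadowing literature; the only point that requires care here is making sure the rescaling sequence $(\lambda_n)$ and, in the weak case, the auxiliary vector $p$ transform correctly under passage between $T$ and $T^k$.

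For $(1)\Rightarrow(3)$, fix $k\in\mathbb{N}$ and $\epsilon>0$, and let $\delta>0$ witness the (weak) super-shadowing property of $T$ at level $\epsilon$. Given a $\delta$-pseudotrajectory $(x_n)_{n\in\mathbb{Z}}$ of $T^k$, I would interpolate by setting $\tilde{x}_{nk+j}:=T^j x_n$ for $0\leq j<k$. Then $T\tilde{x}_m=\tilde{x}_{m+1}$ exactly whenever $m+1$ is not a multiple of $k$, while at the indices $m=nk-1$ the error is $\|T^k x_n-x_{n+1}\|\leq\delta$. Hence $(\tilde{x}_m)$ is a $\delta$-pseudotrajectory of $T$, and the hypothesis gives $q\in X$ and $(\lambda_m)_{m\in\mathbb{Z}}$ with $\|\tilde{x}_m-\lambda_m T^m q\|<\epsilon$. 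Restricting to $m=nk$ and defining $\mu_n:=\lambda_{nk}$ produces $\|x_n-\mu_n(T^k)^n q\|<\epsilon$. For the weak variant one keeps the same auxiliary vector $p$, using $T^{nk}p=(T^k)^n p$.

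For $(2)\Rightarrow(1)$, let $C:=\max_{0\leq j<k}\|T^j\|$, and given $\epsilon>0$ take $\epsilon':=\epsilon/(2C)$ with $\delta'$ the constant associated to $T^k$ at level $\epsilon'$. Choose $\delta\leq\min\{\delta'/(kC),\,\epsilon/(2kC)\}$ and let $(x_n)_{n\in\mathbb{Z}}$ be a $\delta$-pseudotrajectory of $T$. Subsample via $y_n:=x_{nk}$; the telescoping identity
\[
T^k y_n-y_{n+1}=\sum_{j=0}^{k-1}T^{k-1-j}\bigl(Tx_{nk+j}-x_{nk+j+1}\bigr)
\]
bounds $\|T^k y_n-y_{n+1}\|$ by $kC\delta\leq\delta'$, so $(y_n)$ is a $\delta'$-pseudotrajectory of $T^k$. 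The hypothesis yields $q\in X$ (and $p$ in the weak case) and $(\mu_n)$ with $\|y_n-\mu_n(T^k)^n q\|<\epsilon'$. For an arbitrary index $m=nk+j$ with $0\leq j<k$, define $\lambda_m:=\mu_n$ and combine the analogous telescoping estimate $\|x_{nk+j}-T^j x_{nk}\|\leq kC\delta$ with
\[
\|T^j x_{nk}-\mu_n T^m q\|\leq\|T^j\|\,\|x_{nk}-\mu_n(T^k)^n q\|<C\epsilon'
\]
to conclude $\|x_m-\lambda_m T^m q\|<\epsilon$. Inserting $T^m p$ in each estimate handles the weak case verbatim.

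The main point to watch is the bookkeeping for the scalars: subsampling forces the same $\mu_n$ to control $k$ consecutive indices of the original pseudotrajectory, which is harmless precisely because the $k$-step drift between $x_{nk+j}$ and $T^j x_{nk}$ is already controlled by $kC\delta$. The same proof applies verbatim to the positive version (indices in $\mathbb{N}_0$), and the weak variant introduces no new idea beyond carrying $T^n p$ along through the inequalities.
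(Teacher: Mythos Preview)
Your proposal is correct and follows essentially the same approach as the paper: interpolate a $T^k$-pseudotrajectory by powers of $T$ to pass from $T$ to $T^k$, and subsample a $T$-pseudotrajectory at multiples of $k$ (then push forward by $T^j$) to pass from $T^k$ back to $T$, carrying the rescaling scalars $\lambda_m=\mu_{\lfloor m/k\rfloor}$ (and $T^m p$ in the weak case) along. Your constant bookkeeping via $C=\max_{0\le j<k}\|T^j\|$ is in fact cleaner than the paper's version, which encodes the same bounds through inclusions of the form $\sum_{j=0}^{k-1}T^j(B(0,\delta_1))\subset B(0,\epsilon)$.
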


\begin{proof}
	To prove (1) implies (3). Let $\epsilon>0$ and $\delta$ corresponds to the super-shadowing property of $T$. Choose any fixed positive integer $k$. We can see that if $(x_{n})_{n}$ is a $\delta$-pseudotrajectory of $T^{k}$ with $\delta>0$, then
	\begin{eqnarray*}
		(x_{0},Tx_{0}, \ldots, T^{k-1}x_{0}, x_{1}, Tx_{1}, \ldots, T^{k-1}x_{1}, x_{2}, Tx_{2}, \ldots, T^{k-1}x_{2}, \ldots )
	\end{eqnarray*}
	is a $\delta$-pseudotrajectory of $T$. Therefore, there exists a vector $(p,q)$ of $X\times X$ and a sequence of complex numbers $(\lambda_{n})_{n}$ such that 
	\begin{eqnarray*}
		\Vert{x_{n}- (T^{k})^{n}p-\lambda_{kn}(T^{k})^{n}q\Vert}<\epsilon\;\; \text{for all}\; n.
	\end{eqnarray*}
	It is evident that (3) implies (2). In order to conclude the proof, let u\textit{}s see that (2) implies (1). Suppose that $T^{k}$ has the super-shadowing property. Given $\epsilon>0$, there exists $\delta_{1}>0$ such that
	$\sum_{j=0}^{k-1}T^{j}(B(0,\delta_{1}))\subset B(0,\epsilon)$. Now, for  $\delta_{1}$ there exists $\delta_{2}>0$ with $\delta_{2}\in (0, \delta_{1})$
	corresponding to the positive super-shadowing property of $T^{k}$. We can choose $\delta>0$ such that $\sum_{j=0}^{k-1}T^{j}(B(0,\delta))\subset B(0,\delta_{2})$. If $(x_{n})_{n}$ is a $\delta$-pseudotrajectory for $T$, then $(x_{kn})_{n}$ is $\delta_{2}$-pseudotrajectory for $T^{k}$, then there exists $(p,q)\in X\times X$ and a sequence of complex numbers $(\beta_{n})_{n}$ such that
	\begin{eqnarray*}
		x_{kn}-(T^{k})^{n}p-\lambda_{n}(T^{k})^{n}q\in B(0,\delta_{1}), \hspace{0.3cm} \text{for all}\; n.
	\end{eqnarray*}
	Consider the sequence $(\beta_{j})_{j}$ given by $b_{j}=\lambda_{n}$ whenever $kn\leq j<(k+1)n-1$. Then for the $\delta$-pseudotrajectory $(x_{j})_{j}$ we have
	\begin{align*}
		\;\;\;  x_{j}-T^{j}p-\beta_{j}T^{j}q &= \sum_{\ell=0}^{j-kn-1}T^{\ell}(x_{j-\ell}-Tx_{j-\ell-1})\\
		{}  &{} +
		T^{j-kn}(x_{kn}-T^{kn}p-\lambda_{n}T^{kn}q)\in B(0,\epsilon).
	\end{align*}
This completes the proof.
\end{proof}

\section{Compact Operators and positive super-shadowing}\label{sec3}

Our first example of a compact operator that does not possess the positive shadowing property but does possess the \emph{positive super-shadowing} property is presented in Theorem \ref{super sha- no sha}. In this section, we characterize compact operators on a Banach space that possess the positive super-shadowing property (Theorem \ref{compact-1}) and the positive limit super-shadowing property (Theorem \ref{compact-2}).

Nilson Bernardes et al. showed that, for compact operators, hyperbolicity is equivalent to both the positive shadowing property and the positive limit shadowing property \cite[Theorem 15]{bernardes2018expansivity}. This allows us to conclude that if a compact operator $T$ does not possess the positive shadowing property, then its spectrum $\sigma(T)$ intersects $\mathbb{T}$.

It is a well-established result that for invertible operators in finite-dimensional spaces, hyperbolicity is equivalent to the operator having the shadowing property. This fact is proven by considering the Jordan decomposition of the operator. Assuming that the operator admits an eigenvalue on $\mathbb{T}$, a pseudotrajectory is constructed that cannot be shadowed \cite{ombach1994shadowing}.

\begin{theorem}\label{dim-finita}
	For $m > 1$, let $T: \mathbb{C}^m \to \mathbb{C}^m$ be an invertible operator. Then the following are equivalent:
	\begin{enumerate}
		\item $T$ has the shadowing property.
		\item $T$ has the super-shadowing property.
		\item $T$ has the limit super-shadowing property.
	\end{enumerate}
\end{theorem}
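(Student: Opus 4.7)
The plan pivots through hyperbolicity: by the classical Ombach equivalence \cite{ombach1994shadowing}, an invertible $T$ on $\mathbb{C}^m$ satisfies (1) if and only if $\sigma(T)\cap\mathbb{T}=\emptyset$, in which case $T$ also enjoys bilateral limit shadowing. The implications $(1)\Rightarrow(2)$ and $(1)\Rightarrow(3)$ are thus immediate by taking $\lambda_n=1$ for every $n$. The substance of the theorem lies in the two converses, which I will prove by contrapositive: assuming $\sigma(T)\cap\mathbb{T}\neq\emptyset$, I will exhibit a pseudotrajectory that rules out (2), and a decaying-error variant that rules out (3).

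Fix $\mu\in\sigma(T)\cap\mathbb{T}$. In the Jordan decomposition $\mathbb{C}^m=\bigoplus_iE_{\mu_i}$, every generalized eigenspace (and every single Jordan block inside it) is $T$-invariant with $T$-invariant complement, so by Proposition~\ref{T-inv} both super-shadowing and limit super-shadowing descend to any such restriction. I will split the analysis into three cases. In the \emph{Jordan case}, some Jordan block at $\mu$ has size $K\geq 2$; restricting to this block (chain $v_1,\ldots,v_K$ with $Tv_1=\mu v_1$, $Tv_j=\mu v_j+v_{j-1}$ for $j\geq 2$, $\|v_K\|=1$), I will take $x_0=0$ and $x_{n+1}=Tx_n+\delta\mu^{n+1}v_K$, a $\delta$-pseudotrajectory. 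Writing $x_n=\mu^n\sum_jc_j(n)v_j$, induction yields $c_{K-j}(n)=\delta\binom{n}{j+1}\mu^{-j}$, so $c_1(n)$ is a polynomial of degree exactly $K$ in $n$. For any candidate $q=\sum_k\alpha_kv_k$, the $v_1$-component of $\mu^{-n}T^nq$ is a polynomial in $n$ of degree at most $K-1$; matching the $v_K$-components in $\|x_n-\lambda_nT^nq\|<\epsilon$ forces $\lambda_n\sim n\delta/\alpha_K$, and the $v_1$-matching then yields a leading-term mismatch of order $\delta n^K(K-1)/K!$, which exceeds $\epsilon$ for all sufficiently large $|n|$.

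The other two cases handle the situation in which every Jordan block at eigenvalues in $\mathbb{T}$ is semisimple. In the \emph{diagonal case}, $T$ admits two linearly independent eigenvectors $v_1,v_2$ for eigenvalues $\mu_1,\mu_2\in\mathbb{T}$ (possibly equal): on $\mathrm{span}(v_1,v_2)$ the operator $T$ is diagonal, and the pseudotrajectory $x_n=\mu_1^nv_1+(n\delta)\mu_2^nv_2$ forces $\lambda_n\alpha\approx 1$ and $\lambda_n\beta\approx n\delta$ simultaneously, an immediate contradiction. In the remaining \emph{split case}, $\mu$ is the unique eigenvalue of $T$ on $\mathbb{T}$, semisimple, with $1$-dimensional eigenspace; since $m>1$, there exists $\nu\in\sigma(T)$ with $|\nu|\neq 1$. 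Replacing $T$ by $T^{-1}$ if necessary (bilateral super-shadowing is preserved under inversion) and restricting to $E_\mu\oplus E_\nu$ with $E_\nu$ a single Jordan block, the pseudotrajectory $x_n=(n\delta)\mu^nv_\mu+\nu^nv_\nu$ is admissible, and the $E_\nu$-matching together with $|\nu|^n\to\infty$ as $n\to+\infty$ forces $\lambda_n$ to approach a constant, incompatible with $\lambda_n\alpha\approx n\delta$.

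The hardest step will be the Jordan case for blocks of size $\geq 3$: one must verify that the prescribed $n^K$-growth of the $v_1$-component cannot be absorbed by any choice of $q$ in the full block through the rescalings $(\lambda_n)$, which is exactly what the polynomial-degree and leading-coefficient computation above accomplishes. For the implication $(3)\Rightarrow(1)$, I will run the same three constructions but replace the constant step $\delta$ by a null sequence $\delta_n\to 0$ satisfying $\sum_n\delta_n=\infty$ (for instance $\delta_n=1/(|n|+1)$); the pseudotrajectory errors then vanish as $|n|\to\infty$ while the resulting cumulative drift or the Jordan-block degree mismatch still grows unboundedly, delivering the contrapositive of $(3)$.
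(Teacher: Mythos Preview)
Your argument is correct and follows the same route as the paper's proof: contrapositive through hyperbolicity, restriction via Proposition~\ref{T-inv} to small $T$-invariant summands, and a case split into (i) a Jordan block of size $\ge 2$ at an eigenvalue on $\mathbb{T}$, (ii) two independent unimodular eigenvectors, and (iii) the mixed situation $E_\mu\oplus E_\nu$ with $|\nu|\neq 1$. Your concrete pseudotrajectories differ slightly---your diagonal construction $x_n=\mu_1^nv_1+(n\delta)\mu_2^nv_2$ is simpler than the paper's oscillating $\psi$, and your polynomial-degree bookkeeping in the Jordan block is more systematic than the paper's two-coordinate check---while for the split case the paper simply cites Proposition~\ref{weak-no super} (resp.\ Proposition~\ref{lwss-no lss}), which is exactly the direct computation you outline. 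One imprecision to fix: in the split case, the assertion that ``$E_\nu$-matching forces $\lambda_n$ to approach a constant'' is only literally true when the $E_\nu$-component of $q$ lies in the eigenline; if the $\nu$-block has size $\ge 2$ and $q$ carries a higher chain component $\beta_{k^\ast}w_{k^\ast}$ with $k^\ast\ge 2$, the contradiction instead occurs entirely inside $E_\nu$ (the $w_1$-constraint forces $\lambda_n\sim c\,n^{-(k^\ast-1)}$, while the $w_{k^\ast}$-constraint then makes $|\lambda_n\beta_{k^\ast}\nu^n|\to\infty$). This is a routine patch and does not affect your scheme.
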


\begin{proof}
	The implications $(1) \implies (2)$ and $(1) \implies (3)$ are straightforward. We will first show that $(2) \implies (1)$, followed by proving that $(3) \implies (1)$.
	
	\textbf{Step 1: Proving $(2) \implies (1)$.} 
	Suppose $T$ satisfies the super-shadowing property but does not have the shadowing property. Then, $\sigma(T) \cap \mathbb{T} \neq \emptyset$. Let $\beta \in \sigma(T) \cap \mathbb{T}$ be an eigenvalue of $T$. There exists a $T$-invariant subspace $E \subset \mathbb{C}^m$ associated with this eigenvalue. By Theorem~\ref{T-inv}, the restriction $T|_E$ also satisfies the super-shadowing property.
	
	We claim that $\dim(E) = 1$. Suppose, for contradiction, that $\dim(E) = k > 1$. In this case, $T|_E$ is similar either to a diagonal matrix with entries $\beta$ or to a lower triangular Jordan matrix of the form:
	\begin{align}\label{beta-matrix}
		\begin{bmatrix}
			\beta & {} & {} & {} & 0 \\
			1 & \beta & {} & {} & {} \\
			{} & 1 & \beta & {} & {} \\
			{} & {} & \ddots & \ddots & {} \\
			0 & {} & {} & 1 & \beta
		\end{bmatrix}_{k \times k}.
	\end{align}
	
	\emph{Case 1: Diagonal Matrix.} Without loss of generality, assume $k = 2$. Given $\epsilon > 0$, there exists $\delta > 0$ associated with the super-shadowing property. Consider the $\delta$-pseudotrajectory:
	\begin{align}\label{2x2}
		\begin{bmatrix}
			\delta \beta^n n \\
			\delta \beta^n \psi(n)
		\end{bmatrix},
	\end{align}
	where $|\psi(n) - \psi(n+1)| = 1$ for each $n$, $\psi(n)$ is non-negative and unbounded, and $\{n \in \mathbb{Z} : \psi(n) = 1\}$ is infinite. By the super-shadowing property, there exist $(p, q) \in \mathbb{C}^2$ and a sequence of scalars $(\lambda_n)_{n \in \mathbb{Z}}$ such that
	\[
	\left\| \begin{bmatrix}
		\delta \beta^n n - \lambda_n \beta^n p \\
		\delta \beta^n \psi(n) - \lambda_n \beta^n q
	\end{bmatrix} \right\| < \epsilon, \quad \forall n \in \mathbb{Z}.
	\]
	This implies that $|\lambda_n| \to \infty$ as $|n| \to \infty$. On the other hand, since $q \neq 0$, there exists an infinite subset of integers where $|\lambda_n|$ is bounded above by $(\epsilon + \delta)/|q|$, leading to a contradiction.
	
	\emph{Case 2: Jordan Matrix.} Now consider the sequence $(y_n)_{n \in \mathbb{Z}}$, where $y_n$ is given by:
	\[
	\begin{bmatrix}
		\delta \beta^n n \\
		\delta \beta^{n-1} \frac{n(n-1)}{2} \\
		\vdots
	\end{bmatrix}_{k \times 1}.
	\]
	The remaining components of $y_n$ are chosen to ensure it forms a $\delta$- pseudotrajectory. By hypothesis, there exist $(p, q) \in \mathbb{C}^2$ and a sequence of scalars $(\lambda_n)_{n \in \mathbb{Z}}$ such that:
	\[
	|\delta n - \lambda_n p| < \epsilon \quad \text{and} \quad \left|\delta \frac{n(n-1)}{2} - \lambda_n n p - \lambda_n \beta q\right| < \epsilon.
	\]
	From the first inequality, we deduce that $p \neq 0$ and:
	\[
	\lim_{|n| \to \infty} \frac{\lambda_n}{n} = \frac{\delta}{p}.
	\]
	From the second inequality, we obtain:
	\[
	\left|\frac{\delta}{2} - \frac{\lambda_n}{n-1} p - \frac{\lambda_n \beta q}{n(n-1)}\right| < \frac{\epsilon}{n(n-1)}.
	\]
	Taking the limit as $|n| \to \infty$ leads to a contradiction.
	
	Suppose there exist two distinct eigenvalues of $T$, $\{\alpha, \beta\} \subset \mathbb{T}$. Then, each eigenvalue has an associated one-dimensional eigenspace. By restricting $T$ to the direct sum of these eigenspaces, $T$ inherits the super-shadowing property. Using a procedure similar to (\ref{2x2}), consider the following sequence:
	\begin{align}
		\begin{bmatrix}
			\delta \alpha^n n \\
			\delta \beta^n \psi(n)
		\end{bmatrix},
	\end{align}
	where $|\psi(n) - \psi(n+1)| = 1$ for all $n$, and $\psi(n)$ is non-negative and unbounded. This construction leads to a contradiction, as shown earlier. 
	
	Thus, $T$ admits at most one eigenvalue in $\mathbb{T}$, with its associated eigenspace being one-dimensional. Since $m > 1$, it follows that $\sigma(T) \setminus \mathbb{T} \neq \emptyset$. By Proposition~\ref{weak-no super}, $T$ cannot have the super-shadowing property. This establishes that (2) implies (1).

	\textbf{Step 2: Proving $(3) \implies (1)$.} 
	
	Suppose that $T$ satisfies condition (3) but does not have the shadowing property. We proceed similarly to the proof of $(2) \implies (1)$. This implies that $\sigma(T) \cap \mathbb{T} \neq \emptyset$. Let $\beta \in \sigma(T) \cap \mathbb{T}$, and let $E \subset \mathbb{C}^m$ be the corresponding eigenspace. We claim that $\dim(E) = 1$. Suppose, for contradiction, that $\dim(E) = k > 1$. Then, $T|_E$ has the super-shadowing property and is similar either to a diagonal matrix with entries $\beta$ or to a lower triangular Jordan matrix as in (\ref{beta-matrix}).
	
	\emph{Case 1: Diagonal Matrix}. Without loss of generality, assume $k = 2$. Consider the sequence $(x_n)_{n}$ given by:
	\begin{align*}
		\begin{bmatrix}
			\delta \beta^n \theta(n) \\
			\delta \beta^n \psi(n)
		\end{bmatrix},
	\end{align*}
	where $\theta(n) = \sum_{k=1}^n \frac{1}{k}$ and $\psi(n) = \sum_{k=1}^n \frac{1}{k^{1/2}}$ for $n > 1$. By hypothesis, there exists $(p, q) \in \mathbb{C}^2$ such that:
	\begin{align*}
		\lim_{n \to \infty} |\delta \beta^n \theta(n) - \beta^n \lambda_n p| = 0 \quad \text{and} \quad \lim_{n \to \infty} |\delta \beta^n \psi(n) - \beta^n \lambda_n q| = 0.
	\end{align*}
	Thus, $p, q \neq 0$, and we can distinguish two asymptotic behaviors:
	\begin{align*}
		\lim_{n \to \infty} \frac{\lambda_n}{\theta(n)} = \frac{\delta}{p} \quad \text{and} \quad \lim_{n \to \infty} \frac{\psi(n)}{\lambda_n} = \frac{q}{\delta}.
	\end{align*}
	This leads to a contradiction because $\psi(n)/\theta(n)$ is unbounded as $n \to \infty$.
	
	\emph{Case 2: Jordan Matrix.}. Consider the sequence $(x_n)_{n}$ given by:
	\begin{align*}
		x_n := (\beta^n \gamma_{1,n}, \beta^{n-1} \gamma_{2,n}, \dots, \beta^{n-k+1} \gamma_{k,n}),
	\end{align*}
	for $n > 0$, where $\gamma_{1,n} = \sum_{k=1}^n \frac{1}{k}$, $\gamma_{2,n+1} = \sum_{k=1}^n \gamma_{1,k}$, and $\gamma_{i,n}$ is defined recursively for $2 \leq i \leq k$. By hypothesis and examining the first two components, there exist $(p, q) \in \mathbb{C}^2$ and a sequence $(\lambda_n)_n \subset \mathbb{C}$ such that:
	\begin{align*}
		\lim_{n \to \infty} &\|\gamma_{1,n+1} - \lambda_{n+1} p\| = 0, \\
		\lim_{n \to \infty} &\|\gamma_{2,n+1} - \lambda_{n+1}(n+1)p - \beta \lambda_{n+1} q\| = 0.
	\end{align*}
	Note that $\gamma_{2,n+1} = (n+1)\gamma_{1,n+1} - (1+n)$. Substituting this into the second equation, we obtain:
	\begin{align*}
		\|\gamma_{2,n+1} - \lambda_{n+1}(n+1)p - \beta \lambda_{n+1} q\| = (n+1)\|\gamma_{1,n+1} - \lambda_{n+1}p - 1 - \frac{\beta \lambda_{n+1}}{n+1}q\|.
	\end{align*}
	This implies:
	\begin{align*}
		\lim_{n \to \infty} \|\gamma_{1,n+1} - \lambda_{n+1}p - 1 - \frac{\beta \lambda_{n+1}}{n+1}q\| = 0,
	\end{align*}
	which is a contradiction. 
	
	One can observe that $T$ admits at most one eigenvalue in $\mathbb{T}$, with its associated eigenspace being one-dimensional. Since $m > 1$, it follows that $\sigma(T) \setminus \mathbb{T} \neq \emptyset$. By Proposition~\ref{lwss-no lss}, $T$ cannot have the limit super-shadowing property. This establishes that (3) implies (1).
\end{proof}

\begin{theorem}\label{compact-1}
	Let $T$ be a compact operator on a complex Banach space $X$ with \(\text{dim}(X)\geq 2\). The following statements are equivalent:
	\begin{enumerate}
		\item $T$ satisfies the \textit{positive super-shadowing} property but does not satisfy the \textit{positive shadowing} property.
		\item The dynamical system $(X, T)$ admits a decomposition given by the existence of a closed subspace $Y$ and a one-dimensional subspace $N$, both $T$-invariant, such that $X = Y \oplus N$. Moreover, $T|_Y: Y \to Y$ is nilpotent, and $T|_N: N \to N$ is given by $T|_N(y) = \beta y$ for some $\beta \in \mathbb{T}$.
	\end{enumerate}
\end{theorem}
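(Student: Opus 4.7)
The proof splits into two implications, and my plan addresses each separately.

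For $(2)\Rightarrow(1)$, I would adapt the construction from Theorem \ref{super sha- no sha}. Let $k$ be the nilpotency index of $T|_Y$ and fix a nonzero $\nu\in N$ with $T\nu=\beta\nu$. Given a $\delta$-pseudotrajectory, write $z_n=x_n+c_n\nu$ with $x_n\in Y$ and $c_n\in\mathbb{C}$, and set $p=x_0+\tilde\nu$ where $\tilde\nu\in N\setminus\{0\}$ is close to $c_0\nu$ (taking a small nonzero $\tilde\nu$ if $c_0=0$). Use $\lambda_n=1$ for $n<k$, so the error splits component-wise and is controlled by the standard pseudotrajectory estimates; for $n\geq k$, nilpotency forces $T^n p\in N$, and one picks $\lambda_n$ to match the $\nu$-coefficient of $z_n$, leaving only $\|x_n\|=O(\delta)$ by iterating the pseudotrajectory inequality $k$ times using $T^k|_Y=0$. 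The failure of positive shadowing is automatic: $\sigma(T)=\{0,\beta\}$ meets $\mathbb{T}$, so $T$ is not hyperbolic, and for compact operators hyperbolicity is equivalent to positive shadowing by the result of Bernardes et al.\ cited before Theorem \ref{dim-finita}.

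For $(1)\Rightarrow(2)$, my plan is to combine compact spectral theory with Theorem \ref{dim-finita} and Proposition \ref{T-inv}. The failure of positive shadowing gives $\sigma(T)\cap\mathbb{T}\neq\emptyset$; any such $\beta$ is an isolated eigenvalue with finite-dimensional generalized eigenspace $E_\beta$, and $T|_{E_\beta}$ inherits positive super-shadowing by Proposition \ref{T-inv}. If $\dim E_\beta\geq 2$, Theorem \ref{dim-finita} would force this finite-dimensional $T|_{E_\beta}$ to be hyperbolic, contradicting $\sigma(T|_{E_\beta})=\{\beta\}\subset\mathbb{T}$; hence $\dim E_\beta=1$. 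Two distinct $\mathbb{T}$-eigenvalues $\alpha,\beta$ would likewise give a $T$-invariant $E_\alpha\oplus E_\beta$ of dimension $2$ on which $T$ has super-shadowing but not hyperbolicity, again contradicting Theorem \ref{dim-finita}; so $\sigma(T)\cap\mathbb{T}=\{\beta\}$. Setting $N=E_\beta$ and letting $Y$ be the complementary spectral subspace, the same two-dimensional argument applied to $E_\mu\oplus N$ rules out any nonzero $\mu\in\sigma(T|_Y)$, leaving $\sigma(T|_Y)\subseteq\{0\}$.

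The main obstacle is upgrading $T|_Y$ from compact and quasinilpotent to genuinely nilpotent, since compact quasinilpotent operators need not be nilpotent (Volterra being the standard example). I would argue by contradiction: if $T|_Y$ is not nilpotent, then $\bigcup_{k\geq 0}\ker(T^k|_Y)\subsetneq Y$, so one may choose $x_0\in Y$ with $T^n x_0\neq 0$ for all $n$. For the super-shadowing modulus $\delta$ associated to some fixed $\epsilon>0$, pick $n_0$ satisfying both $n_0\delta>3\epsilon$ and $\|T^{n_0}\|\leq \epsilon/(2n_0\delta)$ (the latter is achievable because $\|T^n\|$ tends to zero faster than any polynomial by quasinilpotence), and rescale $x_0$ so that $\|T^{n_0}x_0\|$ is arbitrarily large. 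Consider the $\delta$-pseudotrajectory $z_n=T^n x_0+n\delta\beta^n\nu$ (with $\|\nu\|=1$): writing any super-shadow as $p=p_Y+c_p\nu$, the linearly drifting $N$-component forces $c_p\neq 0$, and combining the $n=0$ estimate $|\lambda_0 c_p|<C\epsilon$ with the $n=n_0$ estimate $|\lambda_{n_0}c_p-n_0\delta|<C\epsilon$ yields $|\lambda_{n_0}/\lambda_0|>2$. Substituting $p_Y=(x_0+e_0)/\lambda_0$ (from the $n=0$ $Y$-error, with $\|e_0\|<C\epsilon$) into the $n=n_0$ $Y$-error bounds $|1-\lambda_{n_0}/\lambda_0|\,\|T^{n_0}x_0\|$ by a multiple of $\epsilon$, which gives $\|T^{n_0}x_0\|\leq C'\epsilon$ independent of the rescaling of $x_0$, contradicting the free choice of $\|T^{n_0}x_0\|$. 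The delicate point is keeping the bookkeeping uniform: both $\|T^n x_0\|$ and $\|T^n p_Y\|$ tend to zero in the quasinilpotent setting, so the contradiction hinges on the mismatch between the growth of $\lambda_n$ forced by $N$ and the boundedness of $\lambda_0$ forced by $y_0=0$.
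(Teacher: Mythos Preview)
Your plan is essentially the paper's proof, and both directions are sound in outline. Two points deserve care in the nilpotency step of $(1)\Rightarrow(2)$.

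First, ``$\|T^{n_0}\|\leq\epsilon/(2n_0\delta)$'' is impossible as written, since $T$ acts as multiplication by $\beta$ on $N$ and hence $\|T^n\|\geq 1$ for all $n$. You mean $\|(T|_Y)^{n_0}\|$, which is what enters when you bound $\|T^{n_0}e_0\|$ for $e_0\in Y$.

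Second, and more substantively: your pseudotrajectory has $N$-component $n\delta\beta^n\nu$, which vanishes at $n=0$. With $\lambda_0=1$ this gives only $|c_p|<C\epsilon$, an \emph{upper} bound. Hence $|\lambda_{n_0}|$ is bounded \emph{below} (by roughly $n_0\delta/(C\epsilon)$) but not above, and the error term $|\lambda_{n_0}/\lambda_0|\,\|T^{n_0}e_0\|$ is not ``a multiple of $\epsilon$'' as you claim. Your conclusion $\|T^{n_0}x_0\|\leq C'\epsilon$ is still recoverable, but by a different route: since $|\lambda_{n_0}|>2$ one has $|1-\lambda_{n_0}|\geq|\lambda_{n_0}|/2$, and dividing
\[
|1-\lambda_{n_0}|\,\|T^{n_0}x_0\|\leq C\epsilon+|\lambda_{n_0}|\,\|(T|_Y)^{n_0}\|\,C\epsilon
\]
through by $|\lambda_{n_0}|$ yields $\|T^{n_0}x_0\|\leq 2C\epsilon/|\lambda_{n_0}|+2C\epsilon\,\|(T|_Y)^{n_0}\|$, which is indeed uniformly bounded. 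The paper sidesteps this by taking the $N$-component to be $(2\|\mathbb{P}\|+n\delta)\beta^n\nu$, so that at $n=0$ one gets $\|\mathbb{P}\|<|b|<3\|\mathbb{P}\|$, pinning $|\lambda_n|$ from both sides; this makes the subsequent estimates more direct, and you may prefer to imitate it.
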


\begin{proof}
	
	We will show that (1) implies (2). Suppose $T: X \to X$ satisfies the assumptions of (1). Note that \(\sigma(T)\cap \mathbb{T}\neq \emptyset\) since $T$ does not have the positive shadowing property. We claim that \(\sigma(T)\setminus \mathbb{T}\neq \emptyset\). Suppose otherwise, i.e., \(\sigma(T)\subset \mathbb{T}\), which implies that $T:X\to X$ is invertible. Since $T$ is a compact operator, it follows that \(\text{dim}(X)<\infty\). By Theorem \ref{dim-finita}, we obtain \(\text{dim}(X)=1\), leading to a contradiction.  
	
	In this case, we can partition its spectrum $\sigma(T)$ into the union of two disjoint closed non-empty sets:
	\[
	\sigma(T) = \sigma_1(T) \cup \sigma_2(T),
	\]
	where
	\[
	\sigma_1(T) = \{\lambda \in \sigma(T) : |\lambda| \neq 1\} \quad \text{and} \quad \sigma_2(T) = \{\lambda \in \sigma(T) : |\lambda| = 1\}.
	\]
	
	By the Riesz Decomposition Theorem, there exist two closed $T$-invariant subspaces $Y$ and $N$ of $X$ such that $X = Y \oplus N$, $\sigma(T|_Y) = \sigma_1(T)$, and $\sigma(T|_N) = \sigma_2(T)$. Let $S := T|_Y$.
	
	Since $T$ is compact, it follows that $\dim(N) < \infty$. By Theorem \ref{dim-finita}, $N$ is one-dimensional. Thus, $T|_N: N \to N$ is given by $T|_N(y) = \beta y$ for some $\beta \in \mathbb{T}$. Denote by $\mathbb{P}$ the canonical projection from $X$ onto $N$, and by $\mathbb{P}_Y$ the projection from $X$ onto $Y$. Furthermore, let $\nu \in N$ be a unit vector ($\|\nu\| = 1$).
	
	We now show that $S$ is a nilpotent operator. Note that $\sigma(S) = \{0\}$, which follows from the Riesz Decomposition Theorem, Theorem \ref{weak-no super}, and Proposition \ref{T-inv}.
	
	For $\epsilon = 1$, let $\delta > 0$ be associated with the positive super-shadowing property of $T$. Fix $p \in Y$ and consider the $\delta$-pseudotrajectory for $T$ given by
	\[
	\left((S^n p, (2\|\mathbb{P}\| + n\delta)\beta^n \nu)\right)_n.
	\]
	Thus, there exists $(q, b) \in Y \oplus N$ and a sequence of complex numbers $(\lambda_n)_n$ such that $\lambda_0 = 1$ and
	\[
	\|(S^n p, (2\|\mathbb{P}\| + n\delta)\beta^n \nu) - \lambda_n (S^n q, \beta^n b)\| < 1, \quad \forall n \geq 0.
	\]
	First, consider the case $n = 0$, which yields:
	\[
	\|\mathbb{P}\| < \|b\| < 3\|\mathbb{P}\|.
	\]
	For $n > 0$, we obtain the following inequalities:
	\[
	\frac{\|\mathbb{P}\| + n\delta}{3\|\mathbb{P}\|} < |\lambda_n| < \frac{3\|\mathbb{P}\| + n\delta}{\|\mathbb{P}\|}.
	\]
	Let $n_1 \in \mathbb{N}$ be such that $n\delta > 2\|\mathbb{P}\|$ for all $n > n_1$.
	
	The above estimates were established by projecting onto the subspace $N$. Since $S$ is compact and $\sigma(S) = \{0\}$, there exists $n_2 \in \mathbb{N}$ such that $\|S^n x\| < 1$ for all $\|x\| < 1$ and for each $n > n_2$.
	
	Note that $\|p - q\| < \|\mathbb{P}_Y\|$. For $n > \max\{n_1, n_2\}$, we have the following chain of inequalities:
	\begin{align*}
		\|S^n p - \lambda_n S^n q\| &< \|\mathbb{P}_Y\|, \\
		(|\lambda_n| - 1)\|S^n q\| &< \|\mathbb{P}_Y\| + \|S^n(p - q)\|, \\
		\left(\frac{n\delta - 2\|\mathbb{P}\|}{3\|\mathbb{P}\|}\right)\|S^n q\| &< 2\|\mathbb{P}_Y\|, \\
		\|S^n q\| &< \frac{6\|\mathbb{P}\|\|\mathbb{P}_Y\|}{n\delta - 2\|\mathbb{P}\|}.
	\end{align*}
	Thus, for $m > \max\{n_1, n_2\}$, we have:
	\begin{align*}
		\|S^m p\| &< \|\mathbb{P}_Y\| + |\lambda_m|\|S^m q\|, \\
		&< \|\mathbb{P}_Y\| + 6\frac{(3\|\mathbb{P}\| + m\delta)}{m\delta - 2\|\mathbb{P}\|}\|\mathbb{P}_Y\|.
	\end{align*}
	This implies that there exists $M > 0$ such that $\|S^m p\| < M$ for all $p \in Y$. Hence, $S^m \equiv 0$, proving that $S$ is nilpotent.
	
	Now, we will prove that (2) implies (1). Let \(\mathbb{P}\) and \(\mathbb{P}|_{Y}\) be the canonical projections onto \(N\) and \(Y\), respectively. Since \(S := T|_{Y}\) is nilpotent, there exists \(m \in \mathbb{N}\) such that \(S^{m} \equiv 0\). Thus, for any \(\epsilon > 0\), we define
	\begin{align*}
		\delta := \frac{\epsilon}{\Vert \mathbb{P}|_{Y} \Vert \sum_{\ell=0}^{m-1} \Vert S^{\ell} \Vert + m \Vert \mathbb{P} \Vert+1}.
	\end{align*}
	Let \(((x_{n},y_{n}))_{n \geq 0}\) be a \(\delta\)-pseudotrajectory for \(T\). Consider \(q \in N \setminus \{0\}\) such that \(\Vert y_{0} - q \Vert < \delta\), and define a sequence of complex numbers \((\lambda_{n})_{n \geq 0}\) by setting \(\lambda_{n} = 1\) for \(0 \leq n \leq m-1\) and ensuring that \(y_{n} = \lambda_{n} \beta^{n} q\) for all \(n \geq m\).
	
	First, observe that for \(1 \leq n \leq m-1\), we have
	\begin{align*}
		\Vert (x_{n}, y_{n}) - T^{n} (x_{0}, q) \Vert & \leq \Vert x_{n} - S^{n} x_{0} \Vert + \Vert y_{n} - \beta^{n} q \Vert \\
		& \leq \sum_{i=0}^{n-1} \left( \Vert S^{i} (x_{n-i} - S x_{n-i-1}) \Vert + \Vert y_{n-i} - \beta y_{n-i-1} \Vert \right) \\
		& \hspace{3.5cm} + \Vert y_{0} - q \Vert \\
		& < \delta \left( \Vert \mathbb{P}|_{Y} \Vert \sum_{i=0}^{m-1} \Vert S^{i} \Vert + \Vert \mathbb{P} \Vert m + 1 \right) = \epsilon.
	\end{align*}
	Finally, for \(n \geq m\), we obtain
	\begin{align*}
		\Vert (x_{n}, y_{n}) - \lambda_{n} T^{n} (x_{0}, q) \Vert &= \Vert x_{n} \Vert \\
		& \leq \Vert S^{n} x_{0} \Vert + \sum_{i=0}^{n-1} \Vert S^{i} (x_{n-i} - S x_{n-i-1}) \Vert \\
		& \leq 0 + \Vert \mathbb{P}|_{Y} \Vert \delta \sum_{i=0}^{n-1} \Vert S^{i} \Vert \\
		& < \epsilon.
	\end{align*}
	This completes the proof.
\end{proof}

\begin{theorem}\label{compact-2}
	Let $T$ be a compact operator on a complex Banach space $X$ with \(\text{dim}(X)\geq 2\). The following statements are equivalent:
	\begin{enumerate}
		\item $T$ satisfies the \textit{positive limit super-shadowing} property but does not satisfy the \textit{positive shadowing} property.
		\item The dynamical system $(X, T)$ admits a decomposition given by the existence of a closed subspace $Y$ and a one-dimensional subspace $N$, both $T$-invariant, such that $X = Y \oplus N$. Moreover, \(\sigma(T|_{Y})=\{0\}\), and $T|_N: N \to N$ is given by $T|_N(y) = \beta y$ for some $\beta \in \mathbb{T}$.
	\end{enumerate}
\end{theorem}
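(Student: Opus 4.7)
My plan is to mirror the structure of the proof of Theorem \ref{compact-1}, with two key modifications: the ``nilpotent'' conclusion for $T|_Y$ is replaced by the weaker ``spectrum $\{0\}$'' conclusion, and the limit version of Theorem \ref{dim-finita} (rather than its super-shadowing version) is the finite-dimensional engine. Throughout, I lean on the remark after Proposition \ref{T-inv}, which transfers positive limit super-shadowing to $T$-invariant direct summands.

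For $(1) \Rightarrow (2)$, the opening moves copy those of Theorem \ref{compact-1}. Since $T$ is compact and lacks positive shadowing, the characterization of positive shadowing for compact operators in \cite{bernardes2018expansivity} gives $\sigma(T) \cap \mathbb{T} \neq \emptyset$; and if $\sigma(T) \subset \mathbb{T}$, then $T$ is compact and invertible, so $\dim X < \infty$, and Theorem \ref{dim-finita} upgrades limit super-shadowing to shadowing, hence to hyperbolicity, contradicting $\sigma(T) \subset \mathbb{T}$. The Riesz decomposition then produces $X = Y \oplus N$ with $\sigma(T|_Y) = \sigma(T) \setminus \mathbb{T}$ and $\sigma(T|_N) = \sigma(T) \cap \mathbb{T}$; compactness forces $\dim N < \infty$, and passing positive limit super-shadowing to $T|_N$ and invoking Theorem \ref{dim-finita} forces $\dim N = 1$ (otherwise $T|_N$ would be a hyperbolic invertible operator on $\mathbb{C}^m$ with $m > 1$ whose spectrum lies in $\mathbb{T}$). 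Thus $T|_N y = \beta y$ for some $\beta \in \mathbb{T}$.

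The main obstacle is upgrading $\sigma(T|_Y) \cap \mathbb{T} = \emptyset$ to $\sigma(T|_Y) = \{0\}$, because the analogous step in Theorem \ref{compact-1} argued the stronger nilpotency conclusion through tailored $\delta$-pseudotrajectories rather than a spectral argument. I plan a purely spectral route: assume $\alpha \in \sigma(T|_Y) \setminus \{0\}$; by compactness of $T|_Y$, the point $\alpha$ is an isolated eigenvalue of finite multiplicity with a finite-dimensional $T$-invariant spectral subspace $E_\alpha$; a second Riesz projection splits $Y = E_\alpha \oplus E_\alpha'$ and yields the $T$-invariant decomposition $X = W \oplus E_\alpha'$ with $W := E_\alpha \oplus N$. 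Proposition \ref{T-inv} transfers positive limit super-shadowing to $T|_W$, which is invertible (its spectrum $\{\alpha, \beta\}$ avoids $0$) and acts on a space of dimension $\geq 2$, so Theorem \ref{dim-finita} forces $T|_W$ to be hyperbolic, contradicting $\beta \in \sigma(T|_W) \cap \mathbb{T}$. Hence $\sigma(T|_Y) = \{0\}$.

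For $(2) \Rightarrow (1)$, the failure of positive shadowing is immediate from \cite{bernardes2018expansivity} since $\beta \in \sigma(T) \cap \mathbb{T}$ prevents hyperbolicity. For positive limit super-shadowing, take $(x_n, y_n) \in Y \oplus N$ with $\|T(x_n, y_n) - (x_{n+1}, y_{n+1})\| \to 0$, write $y_n = c_n \nu$ for a unit $\nu \in N$, and set $w_n := x_{n+1} - T|_Y x_n$. Unfolding the recurrence to $x_n = (T|_Y)^n x_0 + \sum_{k=0}^{n-1} (T|_Y)^{n-1-k} w_k$ and using that $T|_Y$ is quasinilpotent (so $\|(T|_Y)^n\|$ decays faster than any geometric rate and is summable), a standard convolution estimate gives $\|x_n\| \to 0$. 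Finally choose $a = 0 \in Y$, $b = \nu \in N$, and $\lambda_n = \beta^{-n}(c_n + \eta_n)$ with $\eta_n \to 0$ nonzero perturbations guaranteeing $\lambda_n \neq 0$ for every $n$, so that $\|x_n - \lambda_n T^n a\| = \|x_n\| \to 0$ and $\|y_n - \lambda_n T^n b\| = |\eta_n| \to 0$, completing the verification.
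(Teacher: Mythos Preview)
Your proof is correct and follows essentially the same architecture as the paper's: the same Riesz decomposition and invocation of Theorem~\ref{dim-finita} for $(1)\Rightarrow(2)$, and the same convolution estimate using $\sum_i\|S^i\|<\infty$ for $(2)\Rightarrow(1)$. The one noteworthy difference is that in $(2)\Rightarrow(1)$ you shadow with the vector $(0,\nu)$ rather than the paper's $(x_0,\nu)$; this is a genuine simplification, since it reduces the estimate to showing $\|x_n\|\to 0$ directly and entirely bypasses the paper's intermediate step of bounding $|\lambda_n|\le Cn$ and controlling the cross term $|\lambda_n-1|\,\|S^n x_0\|$.
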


\begin{proof}
	We will show that (1) implies (2). By a similar argument as in the proof of Theorem \ref{compact-1}, we can partition $\sigma(T) = \sigma_1(T) \cup \sigma_2(T)$ into two disjoint non-empty closed sets, where $\sigma_1(T) = \sigma(T) \setminus \mathbb{T}$ and $\sigma_2(T) = \sigma(T) \cap \mathbb{T}$. By the Riesz Decomposition Theorem, there exist two closed $T$-invariant subspaces $Y$ and $N$ of $X$ such that $X = Y \oplus N$, $\sigma(T|_Y) = \sigma_1(T)$, and $\sigma(T|_N) = \sigma_2(T)$. 
	
	It is worth noting that \(\text{dim}(N)<\infty\). By Proposition \ref{T-inv}, $T|_{N}: N\to N$ is invertible. According to Theorem \ref{dim-finita}, $N$ is a one-dimensional subspace. Furthermore, \(\sigma_{2}(T)=\{\beta\}\) for some \(\beta\in \mathbb{T}\).
	
	We claim that \(\sigma(T|_{Y})=\{0\}\). Suppose otherwise. In this case, we apply the Riesz Decomposition Theorem to obtain a finite-dimensional $T$-invariant subspace $M$ such that $T|_{M}:M\to M$ is invertible and hyperbolic. By considering the operator $T$ restricted to $M\oplus N$ under Proposition \ref{T-inv} and Proposition \ref{lwss-no lss}, we arrive at a contradiction. 
	
	Now we establish the converse. Suppose that $(X,T)$ satisfies the conditions of item (2). Clearly, the operator $T$ does not have the positive shadowing property since \(\sigma(T)\cap \mathbb{T}\neq \emptyset\). Let $\mathbb{P}$ and $\mathbb{P}|_{Y}$ be the canonical projections onto $N$ and $Y$, respectively. Define $S:=T|_{Y}$ with \(\sigma(S)=\{0\}\). Furthermore, let $\nu \in N$ be a unit vector.
	
	Let \(((x_{n}, y_{n}))_{n\in \mathbb{N}}\) be a sequence in $X$ satisfying
	\begin{align*}
		\lim_{n\to \infty}\Vert{ (x_{n+1},y_{n+1})-T(x_{n},y_{n})\Vert}=0.
	\end{align*}
	We immediately obtain
	\begin{align}
		\lim_{n\to\infty}\Vert{y_{n+1}-\beta y_{n}\Vert}&=0, \label{lim1}\\
		\lim_{n\to\infty} \Vert{x_{n+1}-Sx_{n}\Vert}&=0. \label{lim2}
	\end{align}
	
	Let $(\lambda_{n})_{n\geq 0}$ be a sequence of complex numbers such that $y_{n}=\lambda_{n} \nu$ for each $n\geq 0$. From \eqref{lim1}, we can verify that $|\lambda_{n}|\leq Cn$ for some $C>0$ and all $n>n_{0}$ for some $n_{0}\in \mathbb{N}$.
	
	Thus, for $n>n_{0}$,
	\begin{align*}
		\Vert{(x_{n},y_{n})-\lambda_{n}T^{n}(x_{0},\nu)\Vert}&= \Vert{x_{n}-\lambda_{n}S^{n}x_{0}\Vert}\\ 
		&\leq  \Vert{x_{n}-S^{n}x_{0}\Vert}+ |\lambda_{n}-1|\Vert{S^{n}x_{0}\Vert}\\
		& \leq \sum_{i=0}^{n-1} \Vert{S^{i}\Vert} \Vert{x_{n-i}-Sx_{n-i-1}\Vert}+2Cn\Vert{S^{n}x_{0}\Vert}.
	\end{align*}
	Since \(\sigma(S)=\{0\}\), it follows that $2Cn\Vert{S^{n}x_{0}\Vert}$ converges to $0$ as $n$ tends to infinity. Notice that $\sum_{i\in \mathbb{N}}\Vert{S^{i}\Vert}<\infty$ and, from \eqref{lim2}, we obtain
	\begin{align*}
		\sum_{i=0}^{n-1} \Vert{S^{i}\Vert} \Vert{x_{n-i}-Sx_{n-i-1}\Vert}\xrightarrow[n\to\infty]{}0.
	\end{align*}
	This proves that $T$ has the positive limit super-shadowing property.
\end{proof}



\section{Supercyclicity}
One can observe that an isometric operator on a Banach space cannot be hypercyclic. In the context of supercyclic operators, Ansari and Bourdon \cite{ansari1997some} established that no surjective isometric operator on a Banach space $X$ with $\dim(X) > 1$ can be supercyclic.

Recently, Mayara Braz et al. \cite{antunes2022chain} showed that no surjective isometric operator on a separable Banach space possesses the positive shadowing property. This motivates us to establish the following result.

\begin{theorem}\label{iso-no-super}
No surjective isometric operator on a separable Banach space $X$ with $\dim(X) > 1$ possesses the positive super-shadowing property.
\end{theorem}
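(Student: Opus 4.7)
The strategy is to pass from $T$ to the identity operator via conjugation and then exhibit a $\delta$-pseudotrajectory for the identity that cannot be super-shadowed. Since $T$ is a surjective isometry it is invertible and $T^{-1}$ is also an isometry. For any sequence $(x_n)_{n\geq 0}$ in $X$, set $Y_n := T^{-n} x_n$. Because $T^{-(n+1)}$ preserves norms,
\[
\|Y_{n+1} - Y_n\| = \|T^{-(n+1)}(x_{n+1} - T x_n)\| = \|x_{n+1} - T x_n\|,
\]
so $(x_n)$ is a $\delta$-pseudotrajectory of $T$ if and only if $(Y_n)$ is a $\delta$-pseudotrajectory of the identity $I$. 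Likewise $\|x_n - \lambda_n T^n q\| = \|Y_n - \lambda_n q\|$ for every $q \in X$ and every scalar sequence $(\lambda_n)$. Hence $T$ has the positive super-shadowing property if and only if $I$ does, which is a purely geometric statement: every $\delta$-slow walk $(Y_n)$ in $X$ must stay within $\epsilon$ of some complex line $\mathbb{C} q$ at every time.

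Suppose for contradiction that $I$ has this property and let $\delta > 0$ correspond to $\epsilon = 1$. Since $\dim(X) > 1$, pick linearly independent unit vectors $u, v \in X$; the number $c := d(u, \mathbb{C} v) = \inf_{\mu \in \mathbb{C}} \|u - \mu v\|$ is positive because $\mathbb{C} v$ is closed and $u \notin \mathbb{C} v$. Fix $N \in \mathbb{N}$ with $N\delta > \max\{3, 3/c\}$ and define $(Y_n)_{n \geq 0}$ to walk in increments of norm $\delta$: from $0$ to $N\delta u$ along $u$, back to $0$ along $-u$, from $0$ to $N\delta v$ along $v$, and constantly equal to $N\delta v$ for $n \geq 3N$. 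This is a $\delta$-pseudotrajectory of $I$, so by assumption there exist $q \in X$ and scalars $(\lambda_n)$ with $\|Y_n - \lambda_n q\| < 1$ for all $n$. Evaluating at $n = N$ and $n = 3N$ yields $\|N\delta u - \lambda_N q\| < 1$ and $\|N\delta v - \lambda_{3N} q\| < 1$; the choice $N\delta > 3$ forces $q, \lambda_N, \lambda_{3N}$ to be nonzero and $|\mu| := |\lambda_N/\lambda_{3N}| < (N\delta + 1)/(N\delta - 1) < 2$. Multiplying the second estimate by $\mu$ and applying the triangle inequality,
\[
\|N\delta u - \mu N\delta v\| \leq \|N\delta u - \lambda_N q\| + \|\lambda_N q - \mu N\delta v\| < 1 + |\mu| < 3,
\]
whence $\|u - \mu v\| < 3/(N\delta) < c$, contradicting $\|u - \mu v\| \geq d(u, \mathbb{C} v) = c$.

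The only real obstacle is spotting the conjugation $Y_n = T^{-n} x_n$: it decouples the isometry dynamics from the problem, turning super-shadowing by $T$-orbits into the time-independent question of approximating a slow walk by a single complex line $\mathbb{C} q$. Once this reduction is in place, the two-direction walk supplies the contradiction using only $\dim(X) > 1$; separability of $X$ plays no role in this argument beyond being the natural ambient assumption for the surrounding supercyclicity context.
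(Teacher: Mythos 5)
Your proof is correct, and it takes a genuinely different route from the paper. The paper's argument is indirect: it uses the fact that a surjective isometry is chain recurrent (Proposition~\ref{isometry}), combines this with positive super-shadowing to deduce a supercyclic-transitivity condition (Proposition~\ref{supercyclic}), passes to an actual supercyclic vector via the Baire Category Theorem, and then invokes the Ansari--Bourdon theorem that no surjective isometry on a space of dimension greater than one is supercyclic. Your argument is direct and elementary: the conjugation $Y_n = T^{-n}x_n$, which works precisely because $T$ is a surjective isometry, reduces the positive super-shadowing property of $T$ to that of the identity operator $I$, and for $I$ the question becomes purely geometric---whether every slow walk in $X$ can be uniformly approximated by a single complex line $\mathbb{C}q$. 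Your two-direction walk along independent unit vectors $u$ and $v$, with the estimate $|\lambda_N/\lambda_{3N}| < 2$ forcing $\|u - \mu v\| < 3/(N\delta) < d(u,\mathbb{C}v)$, cleanly produces the contradiction. What this buys you: the proof is self-contained, does not rely on Ansari--Bourdon or any chain-recurrence machinery, and---as you note---never uses separability, so it proves the slightly stronger statement that no surjective isometry on \emph{any} Banach space of dimension greater than one has the positive super-shadowing property. The paper's route, by contrast, fits the result into the supercyclicity framework that the rest of the section develops, which is a valid expository choice but is not logically cheaper.
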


To prove the above result, we need to establish some preliminary statements and recall the notion of chain recurrence.

In the context of linear dynamics \cite{antunes2022chain, bernardes2024shadowing}, it is known that if $T: X \to X$ is chain recurrent, then for any $x, y \in X$ and every $\delta > 0$, there exists a finite sequence $\{x_0, \ldots, x_n\}$ with $n > 0$, $x_0 = x$, and $x_n = y$, such that 
\begin{align}\label{cadena}
d(Tx_i, x_{i+1}) < \delta \quad \text{for all } 0 \leq i < n.
\end{align}

\begin{proposition}[Proposition 2.4 \cite{antunes2022chain}]\label{isometry}
Let $X$ be a normed vector space, and let $T: X \to X$ be a bounded linear operator that is a surjective isometry. Then $T$ is chain recurrent.
\end{proposition}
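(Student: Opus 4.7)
The plan is to construct, for any prescribed endpoints $x, y \in X$ and any tolerance $\delta > 0$, an explicit finite $\delta$-chain joining $x$ to $y$ under $T$. The key feature of a surjective linear isometry is that $T$ is invertible with $T^{-1}$ also a linear isometry, and that both preserve the origin. This will let me interpolate between $x$ and $y$ using a convex combination of a forward orbit of $x$ (which contracts to $0$ in coefficient) and a backward orbit of $y$ (which expands from $0$ in coefficient).

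Concretely, I would fix an integer $n$ to be chosen later and define
\begin{equation*}
 w_k \;=\; \tfrac{n-k}{n}\, T^{k}x \;+\; \tfrac{k}{n}\, T^{k-n}y, \qquad k = 0, 1, \dots, n.
\end{equation*}
Then $w_0 = x$ and $w_n = y$. A direct computation using linearity gives
\begin{equation*}
 T w_k - w_{k+1} \;=\; \tfrac{1}{n}\bigl( T^{k+1}x - T^{k+1-n}y \bigr),
\end{equation*}
so, since $T$ and $T^{-1}$ are isometries, $\|Tw_k - w_{k+1}\| \le \tfrac{1}{n}(\|x\| + \|y\|)$ for every $0 \le k < n$. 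Choosing $n > (\|x\|+\|y\|)/\delta$ makes $\{w_0, \dots, w_n\}$ a $\delta$-chain from $x$ to $y$, which by the definition in \eqref{cadena} proves that $T$ is chain recurrent.

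There is no real obstacle here; the surjectivity hypothesis is used exactly once to guarantee that $T^{-n}y$ makes sense, and the isometry hypothesis is used exactly once to bound $\|T^{k+1}x\|$ and $\|T^{k+1-n}y\|$ by $\|x\|$ and $\|y\|$ respectively. The one subtlety worth flagging in the write-up is that the construction genuinely needs $T^{-1}$: a purely forward interpolation of the form $\tfrac{n-k}{n} T^k x + \tfrac{k}{n} y$ would not close up cleanly because $T$ is not assumed to be the identity, so one cannot avoid invoking surjectivity.
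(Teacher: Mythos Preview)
Your proof is correct. The paper itself does not supply a proof of this proposition; it merely records it as a citation of Proposition~2.4 from \cite{antunes2022chain}, so there is no in-paper argument to compare against. Your convex interpolation $w_k = \tfrac{n-k}{n}T^k x + \tfrac{k}{n}T^{k-n}y$ works exactly as you claim: the computation of $Tw_k - w_{k+1}$ is accurate, and the isometry hypothesis (together with surjectivity, which makes $T^{-1}$ a well-defined isometry) gives the bound $\tfrac{1}{n}(\|x\|+\|y\|)$. Note that you actually establish chain \emph{transitivity} (a $\delta$-chain between any two prescribed points), which is precisely the property \eqref{cadena} that the paper invokes in Proposition~\ref{supercyclic}, and which specializes to chain recurrence upon taking $y = x$.
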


\begin{proposition}\label{supercyclic}
Let $X$ be a separable Banach space, and let $T \in \mathcal{L}(X)$ be an operator possessing both the chain recurrence and positive super-shadowing properties. Then, for any two non-empty open sets $U, V \subset X$, there exists a sequence of complex numbers $(\lambda_n)_n$ and a positive integer $n_0$ such that
\[
\lambda_n T^n(U) \cap V \neq \emptyset, \quad \forall n \geq n_0.
\]
\end{proposition}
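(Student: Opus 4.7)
The plan is to combine chain transitivity with positive super-shadowing: for each sufficiently large $n$, I will build a specific forward $\delta$-pseudotrajectory going from a fixed $u \in U$ to a fixed $v \in V$ in exactly $n$ steps, then super-shadow it to extract a vector $p_n \in U$ whose appropriately rescaled $n$-th iterate lands in $V$.

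First, I fix $u \in U$, $v \in V$ and $\epsilon > 0$ small enough that $B(u,\epsilon) \subset U$ and $B(v,\epsilon) \subset V$. Positive super-shadowing then furnishes $\delta > 0$ such that every forward $\delta$-pseudotrajectory $(x_k)_{k \geq 0}$ admits $p \in X$ and scalars $(\mu_k)_k$, which I may normalize by $\mu_0 = 1$, with $\|x_k - \mu_k T^k p\| < \epsilon$ for all $k \geq 0$.

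Next, I exploit chain recurrence through the chain-transitivity property recalled just before the statement: any two vectors can be joined by a $\delta$-chain. I pick a $\delta$-chain $(a_0 = u, a_1, \dots, a_{k_1} = 0)$ from $u$ to the fixed point $0$ and a $\delta$-chain $(b_0 = 0, b_1, \dots, b_{k_2} = v)$ from $0$ to $v$, and set $n_0 := k_1 + k_2$. For each $n \geq n_0$ I splice these chains using a "pause" at the fixed point: define
\[
x_k^{(n)} := \begin{cases} a_k & 0 \leq k \leq k_1, \\ 0 & k_1 \leq k \leq n - k_2, \\ b_{k-(n-k_2)} & n-k_2 \leq k \leq n, \\ T^{k-n} v & k \geq n. \end{cases}
\]
Since $T(0) = 0$, the pseudotrajectory condition at each of the three seams (leaving the first chain, leaving the fixed point segment, and passing from $v$ to its genuine orbit) reduces either to a single legitimate chain step or to the trivial identity $d(T(0),0) = 0$, so $(x_k^{(n)})_{k \geq 0}$ is a $\delta$-pseudotrajectory with $x_0^{(n)} = u$ and $x_n^{(n)} = v$.

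Applying positive super-shadowing to $(x_k^{(n)})$ produces $p_n \in X$ and scalars $(\mu_k^{(n)})$ with $\mu_0^{(n)} = 1$ and $\|x_k^{(n)} - \mu_k^{(n)} T^k p_n\| < \epsilon$ for all $k \geq 0$. Evaluating at $k = 0$ gives $\|u - p_n\| < \epsilon$, hence $p_n \in U$; evaluating at $k = n$ gives $\|v - \mu_n^{(n)} T^n p_n\| < \epsilon$, hence $\mu_n^{(n)} T^n p_n \in V$. Setting $\lambda_n := \mu_n^{(n)}$ yields $\lambda_n T^n p_n \in \lambda_n T^n(U) \cap V$ for every $n \geq n_0$, as required. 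The only mildly delicate point is the seam verification in the pseudotrajectory construction; everything else is a direct combination of the two hypotheses.
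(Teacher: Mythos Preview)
Your proof is correct and follows essentially the same route as the paper's: both build, for each $n\ge n_0$, a $\delta$-pseudotrajectory by concatenating a $\delta$-chain from the chosen point in $U$ to $0$, a block of zeros of variable length, a $\delta$-chain from $0$ to the chosen point in $V$, and then the true forward orbit of that point; both then super-shadow and read off the conclusion at indices $0$ and $n$. The only cosmetic difference is that you normalize $\mu_0^{(n)}=1$ at the outset (invoking the paper's earlier Remark), whereas the paper keeps the raw scalars $\beta_{i,k}$ and divides by $\beta_{0,k}$ at the end to define $\lambda_n$.
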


\begin{proof}
Let $U, V$ be two non-empty open subsets of $X$. Choose $x \in U$ and $y \in V$, and let $r > 0$ be such that $B(x, r) \subset U$ and $B(y, r) \subset V$. Fix $\epsilon = r/2$, and let $\delta > 0$ be the corresponding value associated with the positive super-shadowing property.

Since $T$ is chain recurrent, there exist two finite sequences satisfying (\ref{cadena}):
\begin{itemize}
    \item $\{x_0, x_1, \dots, x_\ell\}$ with $x_0 = x$ and $x_\ell = 0$,
    \item $\{y_0, y_1, \dots, y_m\}$ with $y_0 = 0$ and $y_m = y$.
\end{itemize}

For each $k \geq 1$, construct a $\delta$-pseudotrajectory of the form:
\[
(z_i^k)_i := \{x_0, x_1, \dots, x_{\ell-1}, \underbrace{0, \dots, 0}_{k \text{ times}}, y_1, \dots, y_m, Ty_m, T^2 y_m, \dots\}.
\]

By hypothesis, for each $k$, there exist $q_k \in X$ and a sequence of complex numbers $(\beta_{n,k})_n$ such that:
\[
\|z_i^k - \beta_{i,k} T^i q_k \| < \epsilon, \quad \forall i \geq 0.
\]

In particular, for $i = 0$ and $i = \ell + k + m$, we have:
\[
\|x - \beta_{0,k} q_k \| < \epsilon \quad \text{and} \quad \|y - \beta_{\ell + k + m, k} T^{\ell + k + m} q_k \| < \epsilon.
\]

Assume $\beta_{0,k} \neq 0$. Consequently, $q_k \in \frac{1}{\beta_{0,k}} U$, and $\beta_{\ell + k + m, k} T^{\ell + k + m} q_k \in V$. Therefore, for $n \geq \ell + m$, define $\lambda_n := \frac{\beta_{n, n-\ell-m}}{\beta_{0, n-\ell-m}}$. It follows that:
\[
\lambda_n T^n(U) \cap V \neq \emptyset, \quad \forall n \geq \ell + m.
\]

Thus, the result is established with $n_0 = \ell + m$.
\end{proof}

\begin{proof}[Proof of Theorem \ref{iso-no-super}]
Suppose that there exists an operator $T$ on a separable Banach space that possesses the super-shadowing property and is also a surjective isometric operator. By Proposition \ref{isometry}, $T$ is chain recurrent. According to Proposition \ref{supercyclic}, $T$ is a supercyclic operator. This leads to a contradiction.
\end{proof}

An important result regarding hypercyclic operators on separable, infinite-dimensional $F$-spaces is the well-known \emph{Birkhoff's Transitivity Theorem} \cite{bayart2009dynamics, grosse2011linear}, which relies on the Baire Category Theorem. Similarly, an analogous result can be established for supercyclic operators \cite[Theorem 1.12]{bayart2009dynamics}.

Recently, Bonilla and Grosse-Erdmann proved a Birkhoff-type theorem for upper Furstenberg families \cite[Theorem 3.1]{bonilla2018upper}. This motivates us to explore this aspect in the context of supercyclic operators.

A non-empty family \(\mathcal{F}\) of subsets of \(\mathbb{N}_0\) is called a \textit{Furstenberg family} if it is hereditary upward, meaning that 
	\begin{align*}
		A \in \mathcal{F}, \, A \subset B \implies B \in \mathcal{F}.
	\end{align*}
	Moreover, a Furstenberg family is said to be \textit{proper} if it does not contain the empty set.

\begin{definition}[\cite{alves2024frequently}]
	Let $X$ be a separable $F$-space, and let $T: X \to X$ be a continuous linear operator. Consider a proper Furstenberg family $\mathcal{F}$. We say that $x \in X$ is an $\mathcal{F}$-supercyclic vector for $T$ if, for every non-empty open subset $U \subset X$, we have
	\begin{align*}
		\{n \geq 0 : \mathbb{C} \cdot T^n x \cap U \neq \emptyset\} \in \mathcal{F}.
	\end{align*}
	We denote by $\mathcal{F}\text{SC}(T)$ the set of all $\mathcal{F}$-supercyclic vectors for $T$.
\end{definition}

Now, let us recall the notion of an upper Furstenberg family. We refer the reader to \cite{bonilla2018upper} for further details.

\begin{definition}
	A Furstenberg family $\mathcal{F}$ is called \emph{upper} if it is proper and it can be written as $\mathcal{F}=\bigcup_{t\in D}\mathcal{F}_{t}$ with $\mathcal{F}_{t}:=\bigcap_{\mu\in M}\mathcal{F}_{t,\mu}$ for some families $\mathcal{F}_{t, \mu}$ where $t\in D$ (an arbitrary set) and $\mu\in M$ (a countable set) such that:
	\begin{itemize}
		\item Each family $\mathcal{F}_{t, \mu}$ is finitely hereditary upward, that is, for any $A\in \mathcal{F}_{t, \mu}$, there is a finite set $F\subset \mathbb{N}$ such that
		\begin{align}\label{f.h.u}
			A\cap F \subset B \implies B\in \mathcal{F}_{t,\mu}.
		\end{align}
		\item $\mathcal{F}$ is uniformly left-invariant, that is, for any $A\in \mathcal{F}$, there is some $t\in D$ such that for all $n\geq 0$,
		\begin{align}\label{u.l.i}
			A-n:=\{k-n: k\in A, k\geq n\}\in \mathcal{F}_{t}.
		\end{align}
	\end{itemize}
\end{definition}

\begin{example}
	We can distinguish important Furstenberg families that are upper Furstenberg families, namely:
	\begin{enumerate}
		\item The family $\mathcal{F}_{\text{ud}}$ of sets $A \subset \mathbb{N}_0$ with positive upper density, defined as:
		\begin{align*}
			\overline{\text{dens}}(A) := \limsup_{N \to \infty} \frac{|A \cap [0, N]|}{N+1} > 0.
		\end{align*}
		We say that $T: X \to X$ is \emph{upper frequently supercyclic} if there exists a vector $x \in X$ such that for every non-empty open subset $U \subset X$,
		\begin{align*}
			\{n \geq 0 : \mathbb{C} \cdot T^n x \cap U \neq \emptyset\} \in \mathcal{F}_{\text{ud}}.
		\end{align*}
		The set of all such vectors is denoted by $\text{UFSC}(T)$.
		
		\item The family $\mathcal{F}_{\text{uBd}}$ of sets $A \subset \mathbb{N}_0$ with positive upper Banach density, defined as:
		\begin{align*}
			\overline{\text{Bd}}(A) := \inf_{N \geq 0} \sup_{m \geq 0} \frac{|A \cap [m, m+N]|}{N+1}.
		\end{align*}
		We say that $T: X \to X$ is \emph{reiteratively supercyclic} if there exists a vector $x \in X$ such that for every non-empty open subset $U \subset X$,
		\begin{align*}
			\{n \geq 0 : \mathbb{C} \cdot T^n x \cap U \neq \emptyset\} \in \mathcal{F}_{\text{uBd}}.
		\end{align*}
		The set of all such vectors is denoted by $\text{RSC}(T)$.
	\end{enumerate}
\end{example}

The following result adapts \cite[Theorem 1.12]{bayart2009dynamics} to our setting, incorporating the necessary modifications.

\begin{theorem}\label{uff-super}
	Let \( T \) be a continuous linear operator on a separable \(F\)-space \( X \). Let 
    \( \displaystyle{\mathcal{F} = \bigcup_{t \in D} \bigcap_{\mu \in M} \mathcal{F}_{t, \mu}} \) be an upper Furstenberg family. The following statements are equivalent:
	\begin{enumerate}
		\item For any non-empty open subset \( V \subset X \), there exists \( t \in D \) such that for any non-empty open subset \( U \subset X \), there exists \( x \in U \) satisfying
		\[
		\{n \geq 0 : \mathbb{C} \cdot T^n x \cap V\neq \emptyset \} \in \mathcal{F}_t.
		\]
		\item For any non-empty open subset \( V \subset X \), there exists \( t \in D \) such that for any non-empty open subset \( U \subset X \) and any \( \mu \in M \), there exists \( x \in U \) satisfying
		\[
		\{n \geq 0 : \mathbb{C} \cdot T^n x \cap V\neq \emptyset \} \in \mathcal{F}_{t, \mu}.
		\]
		\item The set of \( \mathcal{F} \)-supercyclic vectors for \( T \) is residual in \( X \).
		\item \( T \) admits an \( \mathcal{F} \)-supercyclic vector.
	\end{enumerate}
\end{theorem}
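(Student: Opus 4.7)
The plan is to prove the cycle $(3)\Rightarrow(4)\Rightarrow(1)\Rightarrow(2)\Rightarrow(3)$, adapting the strategy of Bonilla and Grosse-Erdmann \cite{bonilla2018upper} to the scalar-projective setting appropriate for supercyclicity. Throughout, write $N(x,V):=\{n\geq 0:\mathbb{C}\cdot T^n x\cap V\neq\emptyset\}$ for brevity. Two of the implications are immediate: $(3)\Rightarrow(4)$ because a separable $F$-space is Baire, and $(1)\Rightarrow(2)$ because $\mathcal{F}_t=\bigcap_{\mu\in M}\mathcal{F}_{t,\mu}$ by definition.

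For $(4)\Rightarrow(1)$, I would fix an $\mathcal{F}$-supercyclic vector $x_0$ and an open set $V\subset X$, set $A:=N(x_0,V)\in\mathcal{F}$, and apply uniform left-invariance to select $t\in D$ with $A-m\in\mathcal{F}_t$ for every $m\geq 0$. Given a non-empty open $U\subset X$, density of the supercyclic orbit of $x_0$ yields $\lambda_0\in\mathbb{C}\setminus\{0\}$ and $m_0\geq 0$ with $x:=\lambda_0 T^{m_0}x_0\in U$ (shrinking $U$ away from $0$ first if necessary). Since $\mathbb{C}\cdot T^n x=\mathbb{C}\cdot T^{n+m_0}x_0$, a direct computation gives $N(x,V)=A-m_0\in\mathcal{F}_t$.

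The substantive step is $(2)\Rightarrow(3)$, via the Baire category theorem. Fix a countable open base $(V_k)_{k\geq 1}$ of $X$ consisting of sets avoiding $0$, so that the cone $\Omega_k:=\bigcup_{\lambda\neq 0}\lambda V_k$ is open and $N(x,V_k)=\{n\geq 0:T^n x\in \Omega_k\}$. For each $k$ let $t_k\in D$ be furnished by $(2)$, and define $G_{k,\mu}:=\{x\in X:N(x,V_k)\in\mathcal{F}_{t_k,\mu}\}$. Using $\mathcal{F}_{t_k}=\bigcap_\mu\mathcal{F}_{t_k,\mu}$, the upward hereditary property of $\mathcal{F}$, and refinement of a general open $V$ by some $V_k\subset V$, one verifies that $\bigcap_{k\geq 1}\bigcap_{\mu\in M}G_{k,\mu}\subset\mathcal{F}\text{SC}(T)$. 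Since $M$ is countable, it suffices to show that each $G_{k,\mu}$ is residual.

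The main obstacle is establishing this residuality, which is where the finitely hereditary upward property pays off. Given any non-empty open $U$, hypothesis $(2)$ yields $x_U\in U$ with $A_U:=N(x_U,V_k)\in\mathcal{F}_{t_k,\mu}$, and the finitely hereditary upward property provides a finite set $F_U\subset\mathbb{N}_0$ such that any set containing $E_U:=A_U\cap F_U$ lies in $\mathcal{F}_{t_k,\mu}$. The set $W_U:=U\cap\bigcap_{n\in E_U}T^{-n}(\Omega_k)$ is open and non-empty (it contains $x_U$), and every $y\in W_U$ satisfies $E_U\subset N(y,V_k)$, hence $y\in G_{k,\mu}$. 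Taking a maximal pairwise-disjoint family of such $W_U$ produces an open dense subset of $G_{k,\mu}$, so $G_{k,\mu}$ is residual. The key delicate point is that all openness arguments depend on $\Omega_k$ being open, which is precisely why the reduction to base opens missing $0$ is indispensable; this is the place where the supercyclic (rather than hypercyclic) character of the theorem makes itself felt.
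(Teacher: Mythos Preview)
Your proposal is correct and follows essentially the same route as the paper: the same cycle $(3)\Rightarrow(4)\Rightarrow(1)\Rightarrow(2)\Rightarrow(3)$, the same use of uniform left-invariance for $(4)\Rightarrow(1)$, and the same Baire argument via the finitely hereditary upward property for $(2)\Rightarrow(3)$. The only cosmetic difference is in the openness step: you package the supercyclic hitting condition as $T^n y\in\Omega_k$ for the open cone $\Omega_k=\bigcup_{\lambda\neq 0}\lambda V_k$ (hence your insistence that the base sets avoid $0$), whereas the paper fixes finitely many scalars $\lambda_m$ with $\lambda_m T^m x\in V_k$ and takes the finite intersection $\bigcap_m(\lambda_m T^m)^{-1}(V_k)$ directly, so no restriction on the base is needed; also, the paper shows each $G_{k,\mu}$ is itself open and dense rather than merely containing an open dense set, which makes the ``maximal pairwise-disjoint family'' detour unnecessary.
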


\begin{proof}
	The implications \((1) \Rightarrow (2)\) and \((3) \Rightarrow (4)\) are immediate. In order to show \((2) \Rightarrow (3)\), consider a countable basis of non-empty open sets \((V_k)_{k \geq 1}\) in \(X\). For each \(k \geq 1\), let \(t_k \in D\) denote the parameter associated with the open set \(V_k\) by \((2)\). Thus, for $k\in \mathbb{N}$ and $\mu\in M$, consider the set $\mathcal{O}_{k, \mu}$ given by
	\begin{align*}
		\mathcal{O}_{k, \mu}:=\{x\in X: \{n\geq 0: \mathbb{C}\cdot T^{n}x\cap V_{k}\neq \emptyset\}\in \mathcal{F}_{t_{k}, \mu}\}.
	\end{align*}
	One can notice that $\mathcal{O}_{k, \mu}$ is dense by hypothesis. Now, we will show that $\mathcal{O}_{k,\mu}$ is an open set. For any fixed vector $x\in \mathcal{O}_{k, \mu}$, consider the following set $B:=\{n\geq 0: \mathbb{C}\cdot T^{n}x\cap V_{k}\neq \emptyset\}\in \mathcal{F}_{t_{k}, \mu}$. Recall that $\mathcal{F}_{t_{k},\mu }$ is finitely hereditary upward. Thus, there is a finite set $F\subset \mathbb{N}$ associated with $B$ that satisfies \eqref{f.h.u}. One can notice that there exist complex numbers $\{\lambda_{m}\}_{m\in B\cap F}$ such that $\lambda_{m}T^{m}x\in V_{k}$ for each $m\in B\cap F$. By a continuity argument, there exists an open set $W$ containing $x$ such that $\lambda_{m}T^{m}(W)\subset V_{k}$ for each $m\in B\cap F$. As a consequence,
	\begin{align*}
		B\cap F\subset \{n\geq 0: \mathbb{C}\cdot T^{n}y\cap V_{k}\neq \emptyset\},\; \forall y\in W.
	\end{align*}
	According to \eqref{f.h.u}, we have that $\{n\geq 0: \mathbb{C}\cdot T^{n}y\cap V_{k}\neq \emptyset\}\in \mathcal{F}_{t_{k}, \mu}$ for each $y\in W$. This shows that $\mathcal{O}_{k,\mu}$ is open, since $x\in W\subset \mathcal{O}_{k, \mu}$.
	
	By the Baire Category Theorem, $E:=\bigcap_{k, \mu} \mathcal{O}_{k, \mu}$ is a residual set. Note that each vector of $E$ is an $\mathcal{F}$-supercyclic vector for $T$.
	
	Now we will show that \((4) \Rightarrow (1)\). Let \(x \in X\) be an \(\mathcal{F}\)-supercyclic vector for \(T\), and let \(V\) be a non-empty open set in \(X\). Then 
	\begin{align*}
		\{n \geq 0 : \mathbb{C} \cdot T^n x \cap V \neq \emptyset\} \in \mathcal{F}.
	\end{align*}
	According to \eqref{u.l.i}, there exists \(t \in D\) such that 
	\begin{align*}
		\{n \geq 0 : \mathbb{C} \cdot T^n x \cap V \neq \emptyset\} - m \in \mathcal{F}_t, \quad \forall m \geq 0.
	\end{align*}
	On the other hand, since \(\mathcal{F}\) is proper, there exists \(m \geq 0\) and $\lambda\in \mathbb{C}$ such that \(\lambda T^m x \in U\). We can certainly assume that \(\lambda\neq 0\). Define \(y := \lambda T^m x \in U\), so that
	\begin{align*}
		\{n \geq 0 : \mathbb{C} \cdot T^n y \cap V\neq \emptyset\} = \{\ell \geq 0 : \mathbb{C} \cdot T^\ell x \cap V\neq \emptyset\} - m \in \mathcal{F}_t.
	\end{align*}
	This allows us to conclude the proof.
\end{proof}

As an immediate consequence of the preceding statement, we obtain the following result for the family $\mathcal{F}_{\text{ud}}$. A similar result can be established for $\mathcal{F}_{\text{uBd}}$.

\begin{corollary}
	Let \(T\) be a continuous linear operator on a separable \(F\)-space \(X\). Then the following assertions are equivalent:
	\begin{enumerate}
		\item For any non-empty open subset \(V\) of \(X\), there exists some \(t > 0\) such that for any non-empty open subset \(U\) of \(X\), there is some \(x \in U\) such that
		\[
		\overline{\emph{dens}}(n \in \mathbb{N} : \mathbb{C} \cdot T^n x \cap V \neq \emptyset) > t.
		\]
		\item For any non-empty open subset \(V\) of \(X\), there exists some \(t > 0\) such that for any non-empty open subset \(U\) of \(X\) and any \( n\geq 0\), there exist some \(x \in U\) and \(N \geq n\) such that
		\[
		\frac{1}{N+1} \mathrm{card} \{n \leq N : \mathbb{C} \cdot T^n x \cap V \neq \emptyset\} > t.
		\]
		\item \(T\) is upper frequently supercyclic.
	\end{enumerate}
	In particular, \(\mathrm{UFSC}(T)\) is residual.
\end{corollary}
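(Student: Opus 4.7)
The plan is to verify that $\mathcal{F}_{\mathrm{ud}}$ fits the framework of upper Furstenberg families and then invoke Theorem \ref{uff-super}; since the family and the notion of upper frequent supercyclicity line up directly, there is no new dynamical content, only structural verifications and translations of quantifiers.

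First I would exhibit the decomposition. Take $D = (0, \infty)$ and $M = \mathbb{N}_0$, and set
\[
\mathcal{F}_{t,\mu} := \left\{ A \subset \mathbb{N}_0 : \exists\, N \geq \mu \text{ with } \frac{|A \cap [0,N]|}{N+1} > t \right\}.
\]
With this choice, $\mathcal{F}_t = \bigcap_{\mu \in M} \mathcal{F}_{t,\mu}$ is precisely the family of sets with $\overline{\text{dens}}(A) \geq t$, so $\mathcal{F}_{\mathrm{ud}} = \bigcup_{t > 0} \mathcal{F}_t$. To verify \eqref{f.h.u}, given $A \in \mathcal{F}_{t,\mu}$ with witness $N$, the finite set $F = \{0, 1, \ldots, N\}$ works: any $B \supset A \cap F$ satisfies $|B \cap [0,N]| \geq |A \cap [0,N]| > t(N+1)$, hence $B \in \mathcal{F}_{t,\mu}$. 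To verify \eqref{u.l.i}, note that if $\overline{\text{dens}}(A) = d > 0$, then setting $t := d/2$ gives $\overline{\text{dens}}(A - n) = d > t$ uniformly in $n \geq 0$, since a finite shift does not alter the upper density; hence $A - n \in \mathcal{F}_t$ for every $n$.

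Once the decomposition is established, the three statements of the corollary are exactly the conditions of Theorem \ref{uff-super} read through the definitions of $\mathcal{F}_t$ and $\mathcal{F}_{t,\mu}$. Condition (1) of the theorem, with "$\in \mathcal{F}_t$" interpreted as $\overline{\text{dens}} \geq t$, yields condition (1) of the corollary after shrinking $t$ slightly to recover the strict inequality "$> t$"; condition (2) of the corollary is obtained by unpacking the definition of "$\in \mathcal{F}_{t,\mu}$" with $\mu$ renamed as $n$ and the witness $N$ exhibited explicitly; condition (4) of the theorem is by definition the existence of an upper frequently supercyclic vector, i.e.\ condition (3) of the corollary. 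Finally, condition (3) of Theorem \ref{uff-super} delivers the residuality claim $\mathrm{UFSC}(T) = \mathcal{F}_{\mathrm{ud}}\text{-}\mathrm{SC}(T)$.

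There is no substantive obstacle here: the only point requiring attention is the standard distinction between "$\geq t$" and "$> t$" for the upper density, which is resolved at no cost by the "there exists $t > 0$" quantifier present in each of the corollary's statements.
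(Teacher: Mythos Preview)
Your proposal is correct and follows exactly the approach the paper intends: the corollary is stated as ``an immediate consequence of the preceding statement'' (Theorem~\ref{uff-super}) applied to $\mathcal{F}_{\mathrm{ud}}$, and you have simply supplied the standard decomposition and quantifier translations that the paper leaves implicit. The only minor imprecision is your description of $\mathcal{F}_t$ as \emph{precisely} the sets with $\overline{\text{dens}}(A) \geq t$ (the boundary case $\overline{\text{dens}}(A)=t$ can go either way), but you already flag and correctly handle the $\geq$ versus $>$ issue, so the argument is unaffected.
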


\begin{corollary}
	Let \(T\) be a continuous linear operator on a separable \(F\)-space \(X\). Then the following assertions are equivalent:
	\begin{enumerate}
		\item For any non-empty open subset \(V\) of \(X\), there exists some \(t > 0\) such that for any non-empty open subset \(U\) of \(X\), there is some \(x \in U\) such that
		\[
		\overline{\emph{Bd}}(n \in \mathbb{N} : \mathbb{C} \cdot T^n x \cap V \neq \emptyset) > t.
		\]
		\item For any non-empty open subset \(V\) of \(X\), there exists some \(t > 0\) such that for any non-empty open subset \(U\) of \(X\) and any \(N \geq 0\), there exist some \(x \in U\) and \(m\geq 0\) such that
		\[
		\frac{1}{N+1} \mathrm{card} \{n\in [m,m+N] : \mathbb{C} \cdot T^n x \cap V \neq \emptyset\} > t.
		\]
		\item \(T\) is reiteratively supercyclic.
	\end{enumerate}
	In particular, \(\mathrm{RSC}(T)\) is residual.
\end{corollary}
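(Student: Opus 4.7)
The plan is to obtain this corollary as a direct application of Theorem~\ref{uff-super} to the family $\mathcal{F}_{\text{uBd}}$, exactly mimicking the deduction of the preceding corollary for $\mathcal{F}_{\text{ud}}$. The crux is therefore to exhibit $\mathcal{F}_{\text{uBd}}$ as an upper Furstenberg family, that is, to produce an explicit decomposition $\mathcal{F}_{\text{uBd}} = \bigcup_{t\in D}\bigcap_{\mu\in M}\mathcal{F}_{t,\mu}$ satisfying the finitely hereditary upward condition \eqref{f.h.u} and the uniform left-invariance condition \eqref{u.l.i}; once this is done, the three items of the corollary match items (1), (2), and (4) of Theorem~\ref{uff-super}, and the residual conclusion follows from item (3) of the theorem.

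For the decomposition, I would take $D = (0,\infty)$ and the countable set $M = \mathbb{N}_{0}$, and for each $t>0$ and each $N\in\mathbb{N}_{0}$ define
\[
\mathcal{F}_{t,N} := \Bigl\{A\subset\mathbb{N}_{0} : \exists\, m\geq 0 \text{ with } |A\cap[m,m+N]| > t(N+1)\Bigr\}.
\]
Then $\mathcal{F}_{t} := \bigcap_{N\in\mathbb{N}_{0}}\mathcal{F}_{t,N}$ consists of those $A$ with $\sup_{m}\frac{|A\cap[m,m+N]|}{N+1} > t$ for every $N$, which readily rephrases as $\overline{\mathrm{Bd}}(A)\geq t$; taking the union over $t>0$ recovers $\mathcal{F}_{\text{uBd}}$ (absorbing the strict/non-strict subtlety by shrinking $t$ if needed).

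The two axioms are then quick to check. Finitely hereditary upward: given $A\in\mathcal{F}_{t,N}$, pick $m$ witnessing the definition and set $F := A\cap[m,m+N]$, which is finite; if $A\cap F=F\subset B$, then $|B\cap[m,m+N]|\geq |F| > t(N+1)$, so $B\in\mathcal{F}_{t,N}$. Uniform left-invariance: for $A\in\mathcal{F}_{\text{uBd}}$, use the translation-invariance of upper Banach density to observe that $\overline{\mathrm{Bd}}(A-n) = \overline{\mathrm{Bd}}(A)$ for every $n\geq 0$, since the shift only alters a finite initial segment while the Banach density is determined by the asymptotic behaviour of window counts; hence, choosing any $t$ with $0<t<\overline{\mathrm{Bd}}(A)$, we get $A-n\in\mathcal{F}_{t}$ for all $n$. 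The main technical obstacle is precisely this translation-invariance verification together with matching the strict/non-strict inequalities between the $\inf$/$\sup$ defining $\overline{\mathrm{Bd}}$ and the countable intersection $\bigcap_{N}\mathcal{F}_{t,N}$; handling it just requires noting that $\overline{\mathrm{Bd}}(A)>0$ admits a strict lower bound $t$ that works uniformly in $n$.

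Once $\mathcal{F}_{\text{uBd}}$ is identified as an upper Furstenberg family, the proof is complete by direct translation. Condition (1) of the corollary, stated via $\overline{\mathrm{Bd}}(\cdot)>t$, coincides with item (1) of Theorem~\ref{uff-super} applied to $\mathcal{F}_{t}$. Condition (2) of the corollary, quantified over all $N\geq 0$ with a single window giving the count exceeding $t$, is exactly item (2) of Theorem~\ref{uff-super} with $\mu=N$. Condition (3) of the corollary is reiterative supercyclicity, which by definition means the existence of an $\mathcal{F}_{\text{uBd}}$-supercyclic vector, matching item (4) of the theorem. Finally, item (3) of Theorem~\ref{uff-super} yields that $\mathrm{RSC}(T)$ is residual, completing the proof.
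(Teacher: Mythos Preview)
Your proposal is correct and follows exactly the approach intended by the paper, which states this corollary as an immediate consequence of Theorem~\ref{uff-super} applied to $\mathcal{F}_{\text{uBd}}$ without giving any details. You have supplied precisely the decomposition $\mathcal{F}_{t,N}$ and the verifications of \eqref{f.h.u} and \eqref{u.l.i} that the paper leaves implicit (relying on the well-known fact, cf.\ \cite{bonilla2018upper}, that $\mathcal{F}_{\text{uBd}}$ is an upper Furstenberg family), and your handling of the strict/non-strict inequality and the shift-invariance of upper Banach density is sound.
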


Bès et al. \cite{bes2016recurrence} established that the set of reiteratively hypercyclic vectors coincides with the set of hypercyclic vectors. This leads us to establish a similar result for supercyclic operators, as follows:

\begin{theorem}
If $T$ is reiteratively supercyclic, then its set of reiteratively supercyclic vectors coincides with its set of supercyclic vectors, that is:
\[
\mathrm{RSC}(T) = \mathrm{SC}(T).
\]
\end{theorem}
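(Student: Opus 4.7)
The inclusion $\mathrm{RSC}(T) \subseteq \mathrm{SC}(T)$ is immediate from the definitions, since positive upper Banach density of $N(x,U) := \{n \geq 0 : \mathbb{C} \cdot T^n x \cap U \neq \emptyset\}$ forces $N(x,U)$ to be non-empty for every non-empty open $U$, so $\mathbb{C}\cdot \mathrm{orb}(x,T)$ is dense. For the reverse inclusion, my plan is to adapt the Bès--Menet--Peris--Puig argument to the supercyclic setting, using the second corollary of Theorem~\ref{uff-super} (the one for $\mathcal{F}_{\mathrm{uBd}}$). Let $x \in \mathrm{SC}(T)$ and fix a non-empty open $U$. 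After shrinking $U$ I may assume $0 \notin U$, and I pick $v_0 \in U$ and $r_0 > 0$ with $B(v_0, 2r_0) \subset U$, setting $V := B(v_0, r_0)$. Since $T$ is reiteratively supercyclic, the corollary provides a $t > 0$ (depending only on $V$) such that for every non-empty open $W \subset X$ and every $N \geq 0$ there exist $y \in W$ and $m \geq 0$ with $\mathrm{card}\{n \in [m, m+N] : \mathbb{C} \cdot T^n y \cap V \neq \emptyset\} > t(N+1)$.

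Fix $N \geq 0$ and obtain such a pair $(y, m)$ from the corollary. Note that $y \neq 0$ and $T^n y \neq 0$ for each $n$ in the return set, since $0 \notin V$. For every such $n$, choose $\mu_n \in \mathbb{C}^*$ with $\mu_n T^n y \in V$. The plan is to transfer the large window for $y$ into one for $x$: by supercyclicity of $x$, find $\lambda_0 \in \mathbb{C}^*$ and $k \geq 0$ with $\|\lambda_0 T^k x - y\| < \delta$, for a $\delta > 0$ to be chosen small enough that $(\mu_n \lambda_0) T^{k+n} x \in B(v_0, 2r_0) \subset U$ for each $n \in N(y,V) \cap [m, m+N]$. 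This will give $|N(x, U) \cap [k+m, k+m+N]| \geq |N(y,V) \cap [m, m+N]| > t(N+1)$; taking the supremum over the shifted starting point and then the infimum over $N$ will yield $\overline{\mathrm{Bd}}(N(x, U)) \geq t > 0$, and hence $x \in \mathrm{RSC}(T)$.

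The step I expect to be the main obstacle is the choice of $\delta$, because the scaling factor $\mu_n$ need not be bounded, unlike in the hypercyclic case. From
\[
\|(\mu_n \lambda_0) T^{k+n} x - \mu_n T^n y\| = |\mu_n| \,\|T^n(\lambda_0 T^k x - y)\| \leq |\mu_n| \,\|T^n\|\, \delta,
\]
one needs $|\mu_n| \,\|T^n\|\, \delta < r_0$ for every $n$ in the return set. The key observation is that $\mu_n T^n y \in V$ forces $|\mu_n| \,\|T^n y\| \leq \|v_0\| + r_0$, which bounds $|\mu_n|$ in terms of $1/\|T^n y\|$. Consequently it suffices to take $\delta$ strictly smaller than $r_0 (\|v_0\|+r_0)^{-1} \min_n \|T^n y\|/\|T^n\|$, a positive quantity because the minimum is over the finite set $N(y,V) \cap [m, m+N]$ on which $T^n y \neq 0$. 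Supercyclicity of $x$ then supplies the required $\lambda_0$ and $k$, and by further imposing $\delta < \|y\|/2$ one guarantees $\lambda_0 \neq 0$. This closes the estimate and shows $\mathrm{SC}(T) \subseteq \mathrm{RSC}(T)$, completing the proof.
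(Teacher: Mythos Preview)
Your argument is correct, but it follows a genuinely different route from the paper's. The paper fixes once and for all a single reiteratively supercyclic vector $y$ and, given the open set $U$, chooses for each $j\in S(y,U)$ a nonzero scalar $\lambda_j$ with $\lambda_j T^{j}y\in U$; it then forms the open neighbourhood
\[
U_n:=\bigcap_{j\in S(y,U)\cap[0,n]}\lambda_j^{-1}T^{-j}(U)
\]
of $y$ and uses supercyclicity of $x$ to place some $\lambda T^{k_n}x$ inside $U_n$, which immediately yields $k_n+(S(y,U)\cap[0,n])\subset S(x,U)$ and hence $\overline{\mathrm{Bd}}(S(x,U))>0$. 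This purely topological device---intersecting finitely many scaled preimages---bypasses entirely the control of $|\mu_n|$ that you correctly flag as the main obstacle, and it does not invoke the Birkhoff-type corollary for $\mathcal{F}_{\mathrm{uBd}}$. Your approach is instead quantitative: you call on that corollary to produce, for each $N$, a point $y=y_N$ together with a window $[m,m+N]$ of return density exceeding $t$, and then transfer the window to $x$ via an explicit $\epsilon$--$\delta$ estimate exploiting the bound $|\mu_n|\le(\|v_0\|+r_0)/\|T^n y\|$. The paper's route is shorter and self-contained, while yours shows concretely how the abstract equivalence in the corollary feeds back into the result and makes the uniform lower bound $t$ visible throughout.
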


\begin{proof}
	It is sufficient to show that every supercyclic vector is, in fact, reiteratively supercyclic. To prove this, consider \(y \in X\), a reiteratively supercyclic vector for \(T\). For any non-empty open set \(U \subset X\), the set
	\begin{align*}
		S(y,U) := \{n \in \mathbb{N} : \mathbb{C} \cdot T^n y \cap U \neq \emptyset\}
	\end{align*}
	has positive upper Banach density. 
	
	For each \(m \in S(y,U)\), there exists a non-zero complex number \(\lambda_m\) such that \(\lambda_m T^m y \in U\). In other words, \(y \in \lambda_m^{-1} T^{-m}(U)\). Now, consider the non-empty open set \(U_n\) containing \(y\) defined as
	\begin{align*}
		U_n := \bigcap_{j \in S(y,U) \cap [0,n]} \lambda_j^{-1} T^{-j}(U).
	\end{align*}
	
	For any fixed \(x \in \mathrm{SC}(T)\), for each \(n\), there exist \(k_n \in \mathbb{N}\) and \(\lambda \in \mathbb{C}\) such that \(\lambda T^{k_n} x \in U_n\). Consequently, \(\lambda \lambda_j T^{k_n + j} x \in U\) for all \(j \in S(y,U) \cap [0,n]\). As an immediate consequence, 
	\begin{align*}
		k_n + \big(S(y,U) \cap [0,n]\big) \subset \{m \geq 0 : \mathbb{C} \cdot T^m x \cap U \neq \emptyset\}=:S(x,U)
	\end{align*}
	Therefore, \(\overline{\mathrm{BD}}(S(x,U)) > 0\). This completes the proof.
\end{proof}

Recently, Ernst, Esser, and Menet showed that if $T$ is upper frequently hypercyclic, then for any $n \geq 1$, the $n$-fold product $T \times \cdots \times T$ is U-frequently hypercyclic. Similarly, the same holds when $T$ is reiteratively hypercyclic. See Corollary 2.7 and Corollary 2.8 in \cite{ernst2021u}.

In the context of supercyclic operators, we encounter a different situation. We can easily construct an example to illustrate this. Let $T$ be a U-frequently hypercyclic operator on a separable Banach space $X$. Now, define the operator $\Lambda: X \times \mathbb{C} \to X \times \mathbb{C}$ by 
\[
\Lambda(x, y) := (Tx, y).
\]
Clearly, $\Lambda$ is U-frequently supercyclic, but $\Lambda \times \Lambda$ is not.

\subsection*{Acknowledgment}
The first was partially supported by CAPES. The second author was partially supported by CNPq- Edital Universal Proc. 407854/2021-5

\bibliographystyle{abbrv}
\bibliography{Bibl.bib}

\begin{thebibliography}{10}

\bibitem{alves2024frequently}
T.~R. Alves, G.~Botelho, and V.~V. F{\'a}varo.
\newblock On frequently supercyclic operators and an
  $\mathscr{F}_{\Gamma}$-hypercyclicity criterior with applications.
\newblock {\em arXiv preprint arXiv:2411.03179}, 2024.

\bibitem{ansari1997some}
S.~I. Ansari and P.~S. Bourdon.
\newblock Some properties of cyclic operators.
\newblock {\em Acta Scientiarum Mathematicarum}, 63(1):195--208, 1997.

\bibitem{antunes2022chain}
M.~B. Antunes, G.~E. Mantovani, and R.~Varao.
\newblock Chain recurrence and positive shadowing in linear dynamics.
\newblock {\em Journal of Mathematical Analysis and Applications},
  506(1):125622, 2022.

\bibitem{bayart2004hypercyclicite}
F.~Bayart and S.~Grivaux.
\newblock Hypercyclicit{\'e}: le r{\^o}le du spectre ponctuel unimodulaire.
\newblock {\em Comptes Rendus. Math{\'e}matique}, 338(9):703--708, 2004.

\bibitem{bayart2006frequently}
F.~Bayart and S.~Grivaux.
\newblock Frequently hypercyclic operators.
\newblock {\em Transactions of the American Mathematical Society},
  358(11):5083--5117, 2006.

\bibitem{bayart2009dynamics}
F.~Bayart and {\'E}.~Matheron.
\newblock {\em Dynamics of linear operators}.
\newblock Number 179. Cambridge university press, 2009.

\bibitem{bayart2015difference}
F.~Bayart and I.~Z. Ruzsa.
\newblock Difference sets and frequently hypercyclic weighted shifts.
\newblock {\em Ergodic Theory and Dynamical Systems}, 35(3):691--709, 2015.

\bibitem{bernardes2025generalized}
N.~C. Bernardes~Jr, B.~M. Caraballo, U.~B. Darji, V.~V. F{\'a}varo, and
  A.~Peris.
\newblock Generalized hyperbolicity, stability and expansivity for operators on
  locally convex spaces.
\newblock {\em Journal of Functional Analysis}, 288(2):110696, 2025.

\bibitem{bernardes2018expansivity}
N.~C. Bernardes~Jr, P.~R. Cirilo, U.~B. Darji, A.~Messaoudi, and E.~R. Pujals.
\newblock Expansivity and shadowing in linear dynamics.
\newblock {\em Journal of Mathematical Analysis and Applications},
  461(1):796--816, 2018.

\bibitem{bernardes2021shadowing}
N.~C. Bernardes~Jr and A.~Messaoudi.
\newblock Shadowing and structural stability for operators.
\newblock {\em Ergodic Theory and Dynamical Systems}, 41(4):961--980, 2021.

\bibitem{bernardes2024shadowing}
N.~C. Bernardes~Jr and A.~Peris.
\newblock On shadowing and chain recurrence in linear dynamics.
\newblock {\em Advances in Mathematics}, 441:109539, 2024.

\bibitem{bes2016recurrence}
J.~B{\`e}s, Q.~Menet, A.~Peris, and Y.~Puig.
\newblock Recurrence properties of hypercyclic operators.
\newblock {\em Mathematische Annalen}, 366:545--572, 2016.

\bibitem{bonilla2018upper}
A.~Bonilla and K.-G. Grosse-Erdmann.
\newblock Upper frequent hypercyclicity and related notions.
\newblock {\em Revista Matem{\'a}tica Complutense}, 31(3):673--711, 2018.

\bibitem{bowen1975omega}
R.~Bowen.
\newblock $\omega$-limit sets for axiom a diffeomorphisms.
\newblock {\em Journal of differential equations}, 18(2):333--339, 1975.

\bibitem{d2021generalized}
E.~D'Aniello, U.~B. Darji, and M.~Maiuriello.
\newblock Generalized hyperbolicity and shadowing in lp spaces.
\newblock {\em Journal of Differential Equations}, 298:68--94, 2021.

\bibitem{ernst2021u}
R.~Ernst, C.~Esser, and Q.~Menet.
\newblock U-frequent hypercyclicity notions and related weighted densities.
\newblock {\em Israel Journal of Mathematics}, 241(2):817--848, 2021.

\bibitem{grosse2011linear}
K.-G. Grosse-Erdmann and A.~P. Manguillot.
\newblock {\em Linear chaos}.
\newblock Springer Science \& Business Media, 2011.

\bibitem{maiuriello2022expansivity}
M.~Maiuriello.
\newblock Expansivity and strong structural stability for composition operators
  on l p spaces.
\newblock {\em Banach Journal of Mathematical Analysis}, 16(4):51, 2022.

\bibitem{moothathu2013two}
T.~S. Moothathu.
\newblock Two remarks on frequent hypercyclicity.
\newblock {\em Journal of Mathematical Analysis and Applications},
  408(2):843--845, 2013.

\bibitem{ombach1994shadowing}
J.~Ombach.
\newblock The shadowing lemma in the linear case.
\newblock {\em Univ. Iagel. Acta Math}, 31:69--74, 1994.

\bibitem{palmer2000shadowing}
K.~J. Palmer.
\newblock {\em Shadowing in dynamical systems: theory and applications}, volume
  501.
\newblock Springer Science \& Business Media, 2000.

\bibitem{peris2005topologically}
A.~Peris.
\newblock Topologically ergodic operators.
\newblock {\em Function Theory on Infinite Dimensional Spaces IX, Madrid},
  2005.

\bibitem{pilyugin2006shadowing}
S.~Y. Pilyugin.
\newblock {\em Shadowing in dynamical systems}.
\newblock Springer, 2006.

\bibitem{shkarin2009spectrum}
S.~Shkarin.
\newblock On the spectrum of frequently hypercyclic operators.
\newblock {\em Proceedings of the American Mathematical Society},
  137(1):123--134, 2009.

\bibitem{sinai1972gibbs}
Y.~G. Sinai.
\newblock Gibbs measures in ergodic theory.
\newblock {\em Russian Mathematical Surveys}, 27(4):21, 1972.

\end{thebibliography}
	
\end{document}